\newtheorem{theorem}{Theorem}[section]
\newtheorem{proposition}[theorem]{Proposition}
\newtheorem{lemma}[theorem]{Lemma}
\newtheorem{corollary}[theorem]{Corollary}
\theoremstyle{definition}
\newtheorem{definition}[theorem]{Definition}
\newtheorem{example}[theorem]{Example}
\theoremstyle{remark}
\newtheorem{remark}[theorem]{Remark}
\newcommand{\Z}{\mathbb{Z}}
\newcommand{\R}{\mathbb{R}}
\newcommand{\Tor}{\mathrm{Tor}}
\newcommand{\dl}{\mathrm{dl}}
\newcommand{\lk}{\mathrm{lk}}
\title{Two-dimensional Golod complexes}
\author{Kouyemon Iriye}
\address{Department of Mathematical Sciences, Osaka Prefecture University, Sakai, 599-8531, Japan}
\email{kiriye@mi.s.osakafu-u.ac.jp}
\author{Daisuke Kishimoto}
\address{Department of Mathematics, Kyoto University, Kyoto, 606-8502, Japan}
\email{kishi@math.kyoto-u.ac.jp}
\subjclass[2010]{13F55, 55P15}
\keywords{Stanley-Reisner ring, Golod property, neighborly complex, polyhedral product, fat-wedge filtration}
\begin{document}

  \begin{abstract}
    We characterize two-dimensional Golod complexes combinatorially by vertex-breakability and topologically by the fat-wedge filtration of a polyhedral product. Applying the characterization, we consider a difference between Golodness over fields and rings, which enables us to give a two-dimensional simple Golod complex over any field such that the corresponding moment-angle complex is not a suspension.
  \end{abstract}

  \baselineskip.525cm

  \maketitle


  \section{Introduction}

  Throughout this paper, let $K$ denote a simplicial complex over the vertex set $[m]=\{1,2,\ldots,m\}$, where ghost vertices are not allowed. Recall that the Stanley-Reisner ring of $K$ over a commutative ring $R$ is defined by
  \[
    R[K]=R[v_1,\ldots,v_m]/(v_{i_1}\cdots v_{i_k}\mid \{i_1,\ldots,i_k\}\not\in K)
  \]
  where we assume that each $v_i$ is of degree 2. It is of particular interest to give a characterization of $K$ which is equivalent to a given algebraic property of $R[K]$. For instance, Cohen-Macaulayness of $R[K]$ is completely characterized by a homological property of $K$. In this paper, we consider a property of $R[K]$, called Golodness, where Golodness was first introduced for a noetherian local ring \cite{G}.

  \begin{definition}
    \label{Golod}
    A simplicial complex $K$ is called \emph{Golod} over $R$ if all products and (higher) Massey products in $\Tor_+^{R[v_1,\ldots,v_m]}(R[K],R)$ vanish, where products and (higher) Massey products are given by the Koszul complex of $R[K]$.
  \end{definition}

  We simply say that $K$ is Golod if it is Golod over any ring.

  Baskakov, Buchstaber and Panov \cite{BBP} showed that there is a space $Z_K$, called the moment-angle complex for $K$, such that
  \begin{equation}
    \label{Z_K}
    H^*(Z_K;R)\cong\Tor_*^{R[v_1,\ldots,v_m]}(R[K],R)
  \end{equation}
  where the isomorphism respects products and (higher) Massey products. This adds a topological viewpoint to the study of Stanley-Reisner rings, which is particularly useful in studying Golodness because $K$ is Golod if $Z_K$ is a suspension. The authors \cite{IK3} developed a nice technology for the study of the homotopy type of $Z_K$, or more generally a polyhedral product, which is called the fat-wedge filtration. In particular, the following is proved in \cite{IK3}, where $\R Z_K$ denotes the real moment-angle complex for $K$.

  \begin{theorem}
    \label{Z_K Tor}
    If the fat-wedge filtration of $\R Z_K$ is trivial, then $Z_K$ is a suspension, implying $K$ is Golod.
  \end{theorem}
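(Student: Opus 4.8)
The statement combines two implications. The second, ``$Z_K$ is a suspension $\Rightarrow$ $K$ is Golod'', is already granted in the discussion preceding the theorem: via the isomorphism \eqref{Z_K}, a suspension is a co-$H$-space, so all products of positive-degree classes in $H^*(Z_K;R)$ vanish and all (higher) Massey products contain zero (see \cite{BBP}). So the real task is to show that triviality of the fat-wedge filtration of $\R Z_K$ forces $Z_K$ to be a suspension.

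I would begin by recalling the shape of the fat-wedge filtration from \cite{IK3}. For a sequence $\underline X=(X_1,\dots,X_m)$ of based CW complexes one has a natural finite filtration
\[
  \ast=P_0\subseteq P_1\subseteq\cdots\subseteq P_m=(\underline{CX},\underline X)^K,
\]
where $P_j$ is the mapping cone of a \emph{connecting map} $\gamma_j\colon W_j\to P_{j-1}$, and $W_j$ is a wedge --- indexed by certain subsets $\sigma\subseteq[m]$ with $|\sigma|=j$ --- of smash products of the form $L_\sigma\wedge\widehat X^{\sigma}$, with $L_\sigma$ a space built only from $K$ and $\widehat X^{\sigma}=\bigwedge_{i\in\sigma}X_i$. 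The filtration is \emph{trivial} when every $\gamma_j$ is null-homotopic; in that case each inclusion $P_{j-1}\hookrightarrow P_j$ splits off, so one obtains a homotopy equivalence $(\underline{CX},\underline X)^K\simeq\bigvee_{j}\Sigma W_j$. Taking $\underline X=\underline{S^0}$ recovers $\R Z_K$ and $\underline X=\underline{S^1}$ recovers $Z_K$.

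The core of the argument is to propagate triviality of the filtration from the real case $\underline X=\underline{S^0}$ to $\underline X=\underline{S^1}$. The data $(W_j,\gamma_j)$ are natural in $\underline X$, and simultaneously suspending the coordinates, $X_i\mapsto\Sigma X_i$, multiplies the $\sigma$-summand $L_\sigma\wedge\widehat X^{\sigma}$ by $(S^1)^{\wedge|\sigma|}=S^{|\sigma|}$ and replaces $\gamma_j$, up to homotopy, by the corresponding smash of $\gamma_j$ with identity maps of spheres. Since smashing a based map with a sphere sends null-homotopic maps to null-homotopic maps, triviality of the fat-wedge filtration of $(\underline{CX},\underline X)^K$ implies triviality of that of $(\underline{C\Sigma X},\underline{\Sigma X})^K$; applying this to $\underline X=\underline{S^0}$ with $S^1=\Sigma S^0$ shows that triviality for $\R Z_K$ implies triviality for $Z_K$. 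Hence $Z_K\simeq\bigvee_j\Sigma W_j$, a wedge of suspensions, which is itself a suspension because $\Sigma A\vee\Sigma B=\Sigma(A\vee B)$; the granted implication then yields that $K$ is Golod.

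The main obstacle is precisely this propagation step: it requires describing $\gamma_j$ explicitly enough to read off the suspension-naturality, and this must be carried out in the mildly awkward setting where $S^0$ is disconnected, so that some care with base points, with reduced versus unreduced cones, and with the indexing at the bottom of the filtration of $\R Z_K$ is needed. If extracting the naturality cleanly turns out to be too delicate, an alternative is to run the inductive splitting of $Z_K$ directly: at stage $j$ the obstruction to splitting $P_{j-1}\hookrightarrow P_j$ lies in a homotopy set into which the corresponding obstruction for $\R Z_K$ maps under a suspension homomorphism, so the vanishing hypothesis kills it. The remaining ingredients --- that a wedge of suspensions is a suspension, and that suspensions have vanishing (Massey) products --- are routine.
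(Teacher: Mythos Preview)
The paper does not prove this statement from scratch: it is quoted from \cite{IK3}, and in Section~2 the paper records that it is an ``immediate corollary'' of Theorem~\ref{decomposition}. Concretely, one applies Theorem~\ref{decomposition} with $\underline{X}=(S^1,\ldots,S^1)$ to obtain
\[
  Z_K\simeq\bigvee_{\emptyset\ne I\subset[m]}|\Sigma K_I|\wedge S^{|I|},
\]
which is manifestly a suspension; Golodness then follows from \eqref{Z_K}. So the paper's route is to invoke the general decomposition for \emph{arbitrary} $\underline{X}$ as a black box, not to pass from $S^0$ to $S^1$ by a suspension argument.

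Your proposal, by contrast, tries to open that black box and argue directly that triviality propagates from $\R Z_K$ to $Z_K$ by ``suspending coordinates''. The weak point is exactly where you flag it, but it is more serious than you suggest. Two issues:
\begin{itemize}
  \item The cone decomposition you describe---$P_j$ the mapping cone of an explicit $\gamma_j\colon W_j\to P_{j-1}$---is what Theorem~\ref{cone decomposition} asserts \emph{only for $\R Z_K$}. For general $(\underline{CX},\underline{X})^K$ there is no such statement in the paper, and producing one is precisely the content of the main theorem of \cite{IK3}. You are assuming the conclusion.
  \item The claimed suspension naturality, that replacing $X_i$ by $\Sigma X_i$ replaces $\gamma_j$ by its smash with a sphere, is not how the argument in \cite{IK3} runs and is not something one can read off formally. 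The actual mechanism is that for \emph{any} $\underline{X}$ the relevant attaching data factor as $\varphi_{K_I}\wedge 1_{\widehat{X}^I}$ (up to homotopy), where $\varphi_{K_I}$ are the maps of Theorem~\ref{cone decomposition}; this is proved by an explicit construction, not by iterated suspension from the $S^0$ case. In particular, $(C\Sigma X,\Sigma X)$ is not obtained from $(CX,X)$ by any operation that transparently smashes attaching maps with $S^1$.
\end{itemize}
So your outline is pointing at the right theorem (Theorem~\ref{decomposition}), but your proposed proof of that theorem has a genuine gap at the propagation step. For the purposes of this paper the correct move is simply to cite Theorem~\ref{decomposition} and specialize to $\underline{X}=\underline{S^1}$, as the paper does.
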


  For several important Golod complexes such as the Alexander dual of sequentially Cohen-Macaulay complexes, the fat-wedge filtration of $\R Z_K$ has been proved to be trivial \cite{IK1,IK2,IK3}. In particular, by describing a condition for the fat-wedge filtration of $\R Z_K$ combinatorially, combinatorial characterizations for Golodness of 1-dimensional complexes and triangulations of closed surfaces have been obtained in \cite{IK1,IK3}. Here we recall the result on surface triangulations. We say that $K$ is $k$-neighborly if every $k+1$ vertices of $K$ form a simplex of $K$.

  \begin{theorem}
    \label{surface}
    If $K$ is a triangulation of a connected closed surface, then the following conditions are equivalent:
    \begin{enumerate}
      \item $K$ is Golod;

      \item $K$ is 1-neighborly;

      \item the fat-wedge filtration of $\R Z_K$ is trivial.
    \end{enumerate}
  \end{theorem}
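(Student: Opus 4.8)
The plan is to close the cycle $(3)\Rightarrow(1)\Rightarrow(2)\Rightarrow(3)$. Since $(3)\Rightarrow(1)$ is immediate from Theorem~\ref{Z_K Tor}, the work lies in $(1)\Rightarrow(2)$ and $(2)\Rightarrow(3)$.

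For $(1)\Rightarrow(2)$ I would prove the contrapositive: a triangulation $K$ of a connected closed surface that is not $1$-neighborly fails to be Golod already over $R=\Z/2$. Recall the Hochster--Baskakov description of the right-hand side of \eqref{Z_K}: up to a degree shift depending on $|W|$,
\[
  \Tor_*^{R[v_1,\dots,v_m]}(R[K],R)\cong\bigoplus_{W\subseteq[m]}\tilde H^{*}(K_W;R),
\]
and under this identification the product of classes carried by disjoint $W_1,W_2$ is the composite
\[
  \tilde H^{a}(K_{W_1};R)\otimes\tilde H^{b}(K_{W_2};R)\xrightarrow{\ \times\ }\tilde H^{a+b+1}(K_{W_1}*K_{W_2};R)\xrightarrow{\ \iota^{*}\ }\tilde H^{a+b+1}(K_{W_1\sqcup W_2};R),
\]
where $\iota\colon K_{W_1\sqcup W_2}\hookrightarrow K_{W_1}*K_{W_2}$ is the tautological inclusion of full subcomplexes. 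Fix non-adjacent vertices $i,j$ and set $W_1=\{i,j\}$, $W_2=[m]\setminus\{i,j\}$; because $i,j$ are non-adjacent, $K_{W_1}=S^0$, hence $K_{W_1}*K_{W_2}\simeq\Sigma K_{W_2}$, while $K_{W_1\sqcup W_2}=K$. Working over $\Z/2$: no face of $K$ contains both $i$ and $j$, so the fundamental $2$-chain of $K$ is $x+s_i+s_j$, where $s_i$ (resp.\ $s_j$) is the sum of the $2$-faces through $i$ (resp.\ through $j$) and $x$ the sum of the remaining $2$-faces; since $\lk_K(i),\lk_K(j)$ are circles one has $\partial s_i=c_i$, $\partial s_j=c_j$, and hence $\partial x=c_i+c_j$, where $c_i,c_j$ are the fundamental $1$-cycles of $\lk_K(i),\lk_K(j)$. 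A short computation in $\Sigma K_{W_2}$ identifies the image of $[K]$ under $\iota_*\colon\tilde H_2(K;\Z/2)\to\tilde H_2(\Sigma K_{W_2};\Z/2)\cong\tilde H_1(K_{W_2};\Z/2)$ with $[c_i]$, and $[c_i]\ne0$: a chain $x'\in C_2(K_{W_2};\Z/2)$ with $\partial x'=c_i$ would make $x'+s_i$ a $2$-cycle of $K$, forcing $x'=s_i$ or $x'=\sum_{T\not\ni i}T$, and both are impossible as $x'$ involves no $2$-face through $i$ or through $j$. By universal coefficients $\iota^{*}$ is then nonzero on $\tilde H^2(\,\cdot\,;\Z/2)$, so the product $\tilde H^0(K_{\{i,j\}})\otimes\tilde H^1(K_{W_2})\to\tilde H^2(K)$ in $\Tor_+$ is nonzero and $K$ is not Golod.

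The substance of the theorem is $(2)\Rightarrow(3)$. Here I would invoke the combinatorial criterion of \cite{IK3} for the fat-wedge filtration of $\R Z_K$ to be trivial, which turns the statement into a filling (null-homotopy) condition on the full subcomplexes $K_W$. For a $1$-neighborly triangulation $K$ of a closed surface, every proper full subcomplex $K_W$ ($W\subsetneq[m]$) is a subsurface with boundary, possibly together with some lower-dimensional part; every vertex link of $K$ is a circle; and $1$-neighborliness is exactly the statement that every minimal non-face of $K$ has at least three vertices. I would verify the criterion by induction on $|W|$: whenever it demands that a boundary circle of some $K_W$ be capped off, the absence of missing edges and small missing faces supplies enough $2$-faces of $K$ on $W\cup\{v\}$ for a suitable $v$ to carry out the cap, and the circle structure of the links keeps the combinatorics of these successive cappings finite and transparent.

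The main obstacle is $(2)\Rightarrow(3)$: unwinding the fat-wedge filtration of $\R Z_K$, reading off from \cite{IK3} the exact combinatorial condition equivalent to its triviality, and then running the inductive verification over all full subcomplexes of a $1$-neighborly surface while controlling the resulting chain of null-homotopies. By contrast $(1)\Rightarrow(2)$ reduces to a single mod-$2$ homology computation; the only minor nuisance there is the degenerate configuration in which $\lk_K(i)$ and $\lk_K(j)$ share vertices or edges, which modifies the description of $K_{W_2}$ as a subsurface but not the conclusion $[c_i]\ne0$.
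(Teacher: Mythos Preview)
The paper does not prove Theorem~\ref{surface} in the body; it is recalled from \cite{IK1,IK3}. That said, the tools assembled here yield a proof, and against that implicit argument your proposal is sound on two legs but misses a shortcut on the third.

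Your $(1)\Rightarrow(2)$ is correct and is exactly the surface specialization of Lemma~\ref{vertex-breakable} and Proposition~\ref{1->2}: a closed surface has vertex-breakable top homology, so for non-adjacent $i,j$ the map $(m_{\{i,j\},[m]\setminus\{i,j\}})_*$ is nonzero, producing a nontrivial product in $\Tor$. Your explicit $\Z/2$-chain computation (the decomposition $[K]=x+s_i+s_j$ and the observation that the only $2$-cycles of a closed surface over $\Z/2$ are $0$ and the fundamental class) is precisely the mechanism behind that lemma, carried out by hand. No complaint here.

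For $(2)\Rightarrow(3)$ you are working much too hard, and the sketch you give is not yet a proof. Since $\dim K=2$, a $1$-neighborly $K$ is $\lceil\dim K/2\rceil$-neighborly, and Theorem~\ref{neighborly} (quoted from \cite{IK3}) gives triviality of the fat-wedge filtration of $\R Z_K$ in one line---no induction over full subcomplexes, no subsurface analysis, no capping of boundary circles. Your paragraph gestures at ``the combinatorial criterion of \cite{IK3}'' but never names it, and the phrases about ``enough $2$-faces \dots\ to carry out the cap'' and ``the circle structure of the links keeps the combinatorics \dots\ finite'' are not tied to any concrete statement about the maps $\varphi_{K_W}$; as written this is a plan for an argument rather than an argument. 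Replace the whole step by a citation of Theorem~\ref{neighborly}.
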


  In this paper, we extend this result to all 2-dimensional simplicial complexes. Clearly, general 2-dimensional simplicial complexes are much more complicated than surface triangulations, and so neighborliness may not be enough to characterize Golodness of 2-dimensional simplicial complexes. Indeed, we have the following example.

  \begin{example}
    Let $K$ be a wedge of two copies of the boundary of a 3-simplex. By definition, $K$ is not 1-neighborly. On the other hand, it follows from \cite[Corollary 7.5]{IK3} that $Z_K$ is of the homotopy type of a wedge of spheres. Then $K$ is Golod.
  \end{example}

  Thus we need to consider a new notion to characterize Golodness of 2-dimensional simplicial complexes. Recall that the full subcomplex of $K$ over a non-empty subset $I\subset[m]$ is defined by $K_I=\{\sigma\in K\mid\sigma\subset I\}$. For a vertex $v\in K$, we write $K_{[m]-v}$ by $\dl_K(v)$. Now we introduce a new notion.

  \begin{definition}
    We say that $K$ has \emph{vertex-breakable} $n$-th homology over an abelian group $A$ if the map
    \[
      \bigoplus_{v\in[m]}(i_v)_*\colon \bigoplus_{v\in[m]}H_n(\dl_K(v);A)\to H_n(K;A)
    \]
    is not surjective, where $i_v\colon\dl_K(v)\to K$ denotes the inclusion. We simply say that $K$ has vertex-breakable $n$-th homology if $K$ has vertex-breakable $n$-th homology over \emph{some} finitely generated abelian group.
  \end{definition}

  \begin{example}
    If $K$ is a triangulation of a connected closed $n$-manifold, then $K$ has vertex-breakable $n$-th homology.
  \end{example}

  Recall that a graph is called chordal if its minimal cycles are of length three. Now we state our results.

  \begin{theorem}
    \label{main}
    If $K$ is a two-dimensional simplicial complex, then the following conditions are equivalent:
    \begin{enumerate}
      \item $K$ is Golod;

      \item the 1-skeleton of $K$ is chordal, and every full subcomplex of $K$ having vertex-breakable second homology is 1-neighborly;

      \item the fat-wedge filtration of $\R Z_K$ is trivial.
    \end{enumerate}
  \end{theorem}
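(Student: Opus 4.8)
The plan is to prove the cycle $(3)\Rightarrow(1)\Rightarrow(2)\Rightarrow(3)$; the first implication is exactly Theorem \ref{Z_K Tor}, so nothing is needed there.

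For $(1)\Rightarrow(2)$ the starting observation is that Golodness is inherited by full subcomplexes: for $I\subset[m]$ the coordinate projection $Z_K\to Z_{K_I}$ is a retraction, so $\Tor_+^{R[v_1,\ldots,v_m]}(R[K_I],R)$ is a direct summand of $\Tor_+^{R[v_1,\ldots,v_m]}(R[K],R)$ as a ring with Massey products, whence $K_I$ is Golod whenever $K$ is. Assume $K$ is Golod. If the $1$-skeleton were not chordal it would contain an induced cycle $C_\ell$ with $\ell\ge4$, and the full subcomplex of $K$ on the vertices of that cycle is exactly $C_\ell$ (it can acquire neither a chord nor a $2$-face); but $C_\ell$ is not Golod by the $1$-dimensional case \cite{IK1}, a contradiction, so the $1$-skeleton is chordal. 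It remains to show that a full subcomplex $L=K_I$ that is not $1$-neighborly but has vertex-breakable $H_2$ is not Golod. A routine universal-coefficients reduction lets us take the coefficients to be a field $k$, so there is $0\ne\xi\in\widetilde H^2(L;k)$ with $\xi|_{\dl_L(v)}=0$ for every $v$, and there are vertices $a\ne b$ with $\{a,b\}\notin L$. Put $J=I\setminus\{a,b\}$. Since $\{a,b\}\notin L$ we have $L=\dl_L(a)\cup\dl_L(b)$ with intersection $L_J$, and the canonical inclusion $L\hookrightarrow\{a,b\}\ast L_J$ carries $\dl_L(a)$ and $\dl_L(b)$ into the two contractible cones of the join, which again meet along $L_J$. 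Comparing the two Mayer--Vietoris sequences — the one for $\{a,b\}\ast L_J$ has connecting map an isomorphism $\widetilde H^1(L_J;k)\xrightarrow{\ \cong\ }\widetilde H^2(\{a,b\}\ast L_J;k)$ — the vanishing of $\xi|_{\dl_L(a)}$ and $\xi|_{\dl_L(b)}$ forces $\xi$ to lie in the image of the map $\widetilde H^2(\{a,b\}\ast L_J;k)\to\widetilde H^2(L;k)$ induced by $L\hookrightarrow\{a,b\}\ast L_J$. By the Baskakov--Buchstaber--Panov description of the product on $H^*(Z_L;k)\cong\Tor_*(k[L],k)$ \cite{BBP} — for disjoint subsets $I',J'$ it is the composite of the join isomorphism $\widetilde H^p(L_{I'})\otimes\widetilde H^q(L_{J'})\cong\widetilde H^{p+q+1}(L_{I'}\ast L_{J'})$ with the map induced by the inclusion $L_{I'\sqcup J'}\hookrightarrow L_{I'}\ast L_{J'}$ — this exhibits the nonzero class of $\xi$ as the product of the nonzero classes coming from $\widetilde H^0(L_{\{a,b\}};k)$ and $\widetilde H^1(L_J;k)$ in $\Tor_+$. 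So $L$ is not Golod over $k$, a contradiction, and $(2)$ follows.

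For $(2)\Rightarrow(3)$ I would analyse the fat-wedge filtration of $\R Z_K$, which is short because $\dim K=2$: its degree-$2$ term is $\R Z_{K^{(1)}}$ for the $1$-skeleton $K^{(1)}$, and the only remaining step attaches the cubes $D^1_i\times D^1_j\times D^1_k\times\{\epsilon\}$ indexed by the $2$-faces $\{i,j,k\}$ and the sign vectors $\epsilon$. The part of the filtration up through $\R Z_{K^{(1)}}$ is trivial precisely because $K^{(1)}$ is chordal, by the $1$-dimensional case \cite{IK1,IK3}, so condition $(2)$ takes care of it. The heart of the matter is the last step, and one must keep in mind that its triviality is strictly stronger than $\R Z_K$ being a wedge of spheres — the latter follows fairly cheaply from chordality together with a Mayer--Vietoris decomposition of $\R Z_K$ over the deletions $\dl_K(v)$ and an induction on the number of vertices, but by itself it does not yield Golodness through Theorem \ref{Z_K Tor}. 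Using the machinery of \cite{IK3}, the triviality of this last step is governed by the full subcomplexes $K_I$: for each $I$ the relevant attaching datum factors through $\R Z_{K_I}$, it is automatically trivial once $K_I$ is $1$-neighborly (so that $\R Z_{(K_I)^{(1)}}$ is contractible), and otherwise the obstruction to contracting it lies in a group which — and this is the crux — is identified with the cokernel of $\bigoplus_{v}H_2(\dl_{K_I}(v);A)\to H_2(K_I;A)$, i.e. with the vertex-breakability of $H_2(K_I)$. Condition $(2)$ says exactly that for every full subcomplex one of these two alternatives holds, so, running the induction on the number of vertices (with the $\dl_K(v)=K_{[m]-v}$, which again satisfy $(2)$, as the reduced pieces and complexes on at most three vertices as the base case), all the obstructions vanish and the fat-wedge filtration of $\R Z_K$ is trivial. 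The main obstacle is precisely this identification of the top-stage obstruction with the vertex-breakability cokernel inside the fat-wedge-filtration formalism of \cite{IK3}, which has to be carried out with care both for the parametrisation of the attaching maps by the sign vectors $\epsilon$ and for integral (rather than merely field) coefficients.
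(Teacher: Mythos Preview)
Your $(3)\Rightarrow(1)$ is fine and matches the paper. The other two implications both have problems.

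\textbf{The implication $(1)\Rightarrow(2)$ has a genuine gap.} You write ``a routine universal-coefficients reduction lets us take the coefficients to be a field $k$''. This is exactly the step that fails, and the paper's own Section~5 furnishes the counterexample: the triangulation $M$ of the Moore space $S^1\cup_4 e^2$ has vertex-breakable $H_2$ over $\Z/4$ (since $H_2(M;\Z/4)\cong\Z/4$ while every $H_2(\dl_M(v);\Z/4)$ is $0$ or $\Z/2$), is not $1$-neighborly, yet is Golod over \emph{every} field. So for $L=M$ your argument would have to produce a nonzero product in $\Tor$ over some field, and there is none. The Mayer--Vietoris part of your argument is sound, but it needs to be run over a ring like $\Z/p^r$, and for that you must know that homological vertex-breakability over $A$ forces a nonzero kernel in $H^2(L;R)\to\bigoplus_v H^2(\dl_L(v);R)$ for some \emph{ring} $R$; this is not automatic from the universal coefficient theorem. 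The paper handles this by going the other way (Lemma~\ref{dual trivial}): triviality of $m_{I,J}^*$ over all rings implies triviality of $(m_{I,J})_*$ over all finitely generated abelian groups, and then an explicit chain computation (Lemma~\ref{vertex-breakable}) shows this forces the edge $\{v,w\}$. If you want to keep your Mayer--Vietoris approach you could use that $\Z/p^r$ is self-injective, so $H^2(-;\Z/p^r)\cong\mathrm{Hom}(H_2(-;\Z/p^r),\Z/p^r)$ and non-surjectivity dualises to non-injectivity; but that is not what you wrote, and reducing from a general finitely generated $A$ to a cyclic one also needs a word.

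\textbf{The sketch for $(2)\Rightarrow(3)$ misreads the filtration.} The fat-wedge filtration $\R Z_K^0\subset\cdots\subset\R Z_K^m$ is indexed by the cardinality of $I\subset[m]$, not by skeleta of $K$; it has $m$ steps regardless of $\dim K$, and $\R Z_{K^{(1)}}$ is not one of its terms. What $\dim K\le 2$ buys is that each $|K_I|$ is a $2$-complex, so null-homotopy of $\varphi_{K_I}\colon|K_I|\to\R Z_{K_I}^{|I|-1}$ is an obstruction problem in degrees $\le 2$. The paper's argument (Proposition~\ref{2->3}) does this: chordality makes $|\widehat K|$ simply connected, so by Lemma~\ref{factorization} each $\varphi_{K_I}$ lifts to the universal cover of $\R Z_{K_I}^{|I|-1}$, and the remaining obstruction is a class in $H^2(K_I;A)$ with $A=\pi_2$; comparing with the deletions $\dl_{K_I}(v)$ via Lemma~\ref{naturality} shows this class lies in the kernel of $H^2(K_I;A)\to\bigoplus_v H^2(\dl_{K_I}(v);A)$, and Proposition~\ref{dual special} (which needs Lemmas~\ref{split}--\ref{dual} to pass from homology surjectivity to cohomology injectivity over a non-field coefficient group) says this kernel vanishes. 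Your intuition that the obstruction ``is identified with the vertex-breakability cokernel'' is morally right, but the identification goes through cohomology with coefficients in $\pi_2$ of a non-simply-connected space, and the passage from the homological hypothesis in (2) to the cohomological injectivity you need is precisely where the non-field subtlety reappears.
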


  Golodness was originally defined for local rings by a certain equality involving the Poincar\'e series of their cohomology. Later, Golod \cite{G} proved that the equality is equivalent to vanishing of products and (higher) Massey products as in Definition \ref{Golod}, which enables us to generalize the notion of Golodness over rings. Then it is natural to ask whether or not Golodness over fields and rings are different. Applying Theorem \ref{main}, we will prove that there is certainly a difference between them.

  \begin{theorem}
    \label{M Golod}
    There is a two-dimensional simplicial complex which is Golod over any field but is not Golod over some ring.
  \end{theorem}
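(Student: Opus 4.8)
The plan is to construct a suitable $K$ and analyze it via Theorem~\ref{main} together with its field-coefficient counterpart. First I would record the obvious refinement of Theorem~\ref{main}: running the same argument with coefficients in a fixed field $\mathbb{F}$ shows that a two-dimensional $K$ is Golod over $\mathbb{F}$ if and only if its $1$-skeleton is chordal and every full subcomplex $K_I$ for which the map $\bigoplus_{v\in I}H_2(\dl_{K_I}(v);\mathbb{F})\to H_2(K_I;\mathbb{F})$ fails to be surjective is $1$-neighborly. Granting this, the task is to build a two-dimensional $K$ which (a) violates condition~(2) of Theorem~\ref{main}, hence is not Golod over $\Z$, and (b) satisfies the $\mathbb{F}$-analogue of condition~(2) for every field $\mathbb{F}$.

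For (a) it suffices to make $K$ not $1$-neighborly while arranging that the cokernel of
\[
  \phi\colon\bigoplus_{v\in[m]}H_2(\dl_K(v);\Z)\longrightarrow H_2(K;\Z)
\]
is a nonzero \emph{finite} group, say $\Z/2$: then $K$ is a full subcomplex of itself with vertex-breakable second homology (witnessed by the finitely generated group $\Z$) that is not $1$-neighborly, so $K$ is not Golod over $\Z$. A finite nonzero cokernel is achievable because the group of $2$-cycles of a two-dimensional complex is a pure subgroup of the free chain group, so each image $H_2(\dl_K(v);\Z)\to H_2(K;\Z)$ is a direct summand of the free group $H_2(K;\Z)$; and while a split subgroup of $\Z^r$ is a summand, a \emph{sum} of split subgroups need not be --- for instance $\langle(1,0)\rangle+\langle(1,2)\rangle$ has index $2$ in $\Z^2$. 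So the mechanism calls for $H_2(K;\Z)\cong\Z^2$ with the various vertex deletions contributing rank-one summands whose sum has index $2$.

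For (b), fix a field $\mathbb{F}$. Chordality of the $1$-skeleton is independent of $\mathbb{F}$, so it remains to check that every non-$1$-neighborly full subcomplex has surjective $\phi$ over $\mathbb{F}$. Over $\mathbb{Q}$ the finite cokernel dies after tensoring, and over $\mathbb{F}_p$ with $p$ odd the summands $\langle(1,0)\rangle$ and $\langle(1,2)\rangle$ already span $\mathbb{F}_p^2$ since $2$ is a unit, which a brief universal-coefficient and diagram-chase argument upgrades to surjectivity of $\phi$. The essential case is $\mathbb{F}_2$, where the integral $H_2$-classes span only an index-$2$ subspace of $H_2(K;\mathbb{F}_2)$: the missing direction must be provided by a torsion class, namely by a vertex $w$ such that $H_1(\dl_K(w);\Z)$ has $2$-torsion and the associated class in $H_2(\dl_K(w);\mathbb{F}_2)$ is carried by the inclusion onto that missing direction. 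I would arrange this by letting $\dl_K(w)$ be the $6$-vertex (minimal, $2$-neighborly, hence $1$-neighborly) triangulation of $\mathbb{RP}^2$, so that its mod-$2$ fundamental class is exactly this torsion class; concretely, $K$ is the union of $\mathbb{RP}^2_6$ with the cone, on a new apex $w$, over a suitable connected subgraph $\Gamma$ of the $1$-skeleton of $\mathbb{RP}^2_6$, where $\Gamma$ omits exactly one vertex $b$ (so that $\{w,b\}$ is the unique non-edge of $K$), has first Betti number $2$ (so that $H_2(K;\Z)\cong\Z^2$ by Mayer--Vietoris), and is positioned so that the deletions $\dl_K(v)$ with $v\neq w$ realise the required index-$2$ pair of summands.

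The main obstacle is the simultaneous verification for one explicit $\Gamma$: that $\mathrm{coker}\,\phi$ is exactly $\Z/2$ --- not $0$, not free, not bigger; that over $\mathbb{F}_2$ the $\mathbb{RP}^2_6$-deletion really closes the gap; and that every \emph{proper} full subcomplex is harmless, the only non-$1$-neighborly proper ones being the $K_I$ containing both $w$ and $b$, for which one needs $H_2(K_I;\mathbb{F})$ either to vanish or to be split off by a single vertex deletion, for every field $\mathbb{F}$. Chordality of the $1$-skeleton is by contrast immediate, since removing one edge from a complete graph leaves it chordal. A final, routine ingredient, used throughout, is the field-coefficient form of Theorem~\ref{main}, obtained by the same proof with $\mathbb{F}$ in place of a general ring.
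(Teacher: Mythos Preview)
Your strategy---finding a $K$ that fails condition~(2) of Theorem~\ref{main} over $\Z$ but satisfies its field analogue---matches the paper's, but the execution diverges in two places. First, the field-coefficient refinement of Theorem~\ref{main} does \emph{not} follow by ``the same proof with $\mathbb{F}$ in place of a general ring'': the implication $(2)\Rightarrow(1)$ in the paper passes through the fat-wedge filtration, a ring-independent topological statement, and Proposition~\ref{2->3} requires surjectivity of the $H_2$-map for \emph{every} finitely generated abelian group, not merely one field. The paper supplies the field version separately (Proposition~\ref{Golod field}), arguing directly via Hochster's product description together with Katth\"an's result that higher Massey products are redundant in dimension~$\le 3$; you would need to reproduce or cite that argument rather than invoke Theorem~\ref{main}.

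Second, the paper's example is different from yours and sidesteps most of your ``main obstacle''. Instead of arranging $H_2(K;\Z)\cong\Z^2$ with an index-$2$ image, the paper builds a $9$-vertex triangulation $M$ of the Moore space $S^1\cup_4 e^2$ by gluing a piece $K_1$ with $|K_1|\simeq S^1$ to the $6$-vertex $\mathbb{RP}^2$ along a triangle. Here $H_2(M;\Z)=0$, so no rank bookkeeping is needed; vertex-breakability appears only through torsion, via $H_2(M;\Z/4)\cong\Z/4$ while each $H_2(\dl_M(v);\Z/4)$ is $0$ or $\Z/2$. Over any field $\Bbbk$, one has $H_2(M_I;\Bbbk)\ne 0$ only when $M_I$ contains $K_2$, and then $M_I$ is either $1$-neighborly (when $M_I=K_2$) or lacks vertex-breakable $H_2$ over $\Bbbk$, so Proposition~\ref{Golod field} applies directly. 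Your $7$-vertex $\mathbb{RP}^2_6\cup(w*\Gamma)$ idea is plausible, but you have not exhibited a specific $\Gamma$ and carried out the checks---the exact cokernel over $\Z$, the $\mathbb{F}_2$ surjectivity via the $\mathbb{RP}^2$ class landing in the missing direction, and the harmlessness of all proper full subcomplexes containing $\{w,b\}$---so the proposal remains a sketch where the paper gives a complete verification.
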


  As in Theorems \ref{surface} and \ref{main} as well as \cite{GT,GW,IK1,IK2,IK3}, Golodness over any field of several important classes of simplicial complexes has been proved to be a consequence of the corresponding moment-angle complexes being suspensions. Then it is natural to ask whether or not there is a simplicial complex $K$ such that $K$ is Golod over any field and $Z_K$ is not a suspension. Yano and the first author \cite{IY} proved that there is such a simplicial complex by a direct calculation. We show that a simplicial complex of Theorem \ref{M Golod} is such a simplicial complex too, which is drastically simpler than the one of Yano and the first author.

  \begin{corollary}
    \label{non-suspension}
    There is a two-dimensional simplicial complex $K$ such that $K$ is Golod over any field and $Z_K$ is not a suspension.
  \end{corollary}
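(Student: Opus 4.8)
The plan is to deduce Corollary~\ref{non-suspension} directly from Theorem~\ref{M Golod} together with the standard fact, recalled in the introduction, that a moment-angle complex which is a suspension forces the underlying complex to be Golod over \emph{every} ring. So I would let $K$ be the two-dimensional simplicial complex produced in the proof of Theorem~\ref{M Golod}. The first assertion of the corollary, that $K$ is Golod over any field, is then immediate from Theorem~\ref{M Golod}.

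For the second assertion I would argue by contradiction: suppose $Z_K$ is homotopy equivalent to a suspension $\Sigma X$. By the isomorphism~\eqref{Z_K}, for every commutative ring $R$ the products and higher Massey products in $\Tor_+^{R[v_1,\ldots,v_m]}(R[K],R)$ correspond to those in $H^*(Z_K;R)\cong H^*(\Sigma X;R)$. Since $\Sigma X$ is a co-$H$-space, its reduced diagonal is null-homotopic, so all cup products — and hence, with suitably chosen defining systems, all higher Massey products — in $H^*(\Sigma X;R)$ vanish in positive degrees, for an arbitrary coefficient ring $R$. Thus $K$ would be Golod over every ring, contradicting the part of Theorem~\ref{M Golod} that says $K$ is not Golod over some ring. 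Therefore $Z_K$ is not a suspension, and $K$ is the required example.

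I do not anticipate a genuine obstacle here, since all the real work is already contained in Theorem~\ref{M Golod}; the implication ``$Z_K$ a suspension $\Rightarrow$ $K$ Golod over any ring'' is exactly the fact used in the introduction and underlying Theorem~\ref{Z_K Tor}. The only point that deserves a line of care is to note that this implication is valid over an arbitrary ring and not merely over a field — it comes from the reduced diagonal of $\Sigma X$ being null-homotopic, which is coefficient-independent — which is precisely why non-Golodness over \emph{some} ring, rather than over a field, is what rules out $Z_K$ being a suspension.
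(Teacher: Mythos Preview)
Your proof is correct and follows the same route as the paper: take $K$ from Theorem~\ref{M Golod}, and use the implication ``$Z_K$ a suspension $\Rightarrow$ $K$ Golod over every ring'' (which the paper packages into the second clause of Theorem~\ref{Z_K Tor}) to derive a contradiction. The only difference is that you spell out the co-$H$-space reasoning behind that implication, whereas the paper simply invokes it.
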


  \begin{proof}
    Let $K$ be a simplicial complex of Theorem \ref{M Golod}. The first statement follows from Theorem \ref{M Golod}. If $Z_K$ is a suspension, then by Theorem \ref{Z_K Tor}, $K$ must be Golod over any ring. Thus the second statement also follows from Theorem \ref{M Golod}.
  \end{proof}

  Section 2 recalls properties of the fat-wedge filtration of a polyhedral product that we are going to use, and Section 3 considers a relation between Golodness and vertex-breakability. Section 4 proves Theorem \ref{main}, and Section 5 constructs a triangulation $M$ of the Moore space $S^1\cup_4e^2$ which proves Theorem \ref{M Golod}.\\

  \noindent\textit{Acknoledgement:} The authors were supported by JSPS KAKENHI No. 19K03473 and No. 17K05248.


  \section{Fat-wedge filtration}

  In this section, we recall from \cite{IK3} properties of the fat-wedge filtration of a polyhedral product that we are going to use. First, we define a polyhedral product. Let $(\underline{X},\underline{A})=\{(X_i,A_i)\}_{i=1}^m$ be a collection of pairs of spaces. For a subset $\sigma\subset[m]$, let
  \[
    (\underline{X},\underline{A})^\sigma=Y_1\times\cdots\times Y_m,\quad\text{where}\quad Y_i=\begin{cases}X_i&i\in\sigma\\A_i&i\not\in\sigma.\end{cases}
  \]
  The polyhedral product of $(\underline{X},\underline{A})$ over $K$ is defined by
  \[
    Z_K(\underline{X},\underline{A})=\bigcup_{\sigma\in K}(\underline{X},\underline{A})^\sigma.
  \]
  Clearly, $Z_K(\underline{X},\underline{A})$ is natural with respect to $(\underline{X},\underline{A})$ and inclusions of subcomplexes of $K$. In particular, for $\emptyset\ne I\subset[m]$, $Z_{K_I}(\underline{X}_I,\underline{A}_I)$ is assumed to be a subspace of $Z_K(\underline{X},\underline{A})$, where $(\underline{X}_I,\underline{A}_I)=\{(X_i,A_i)\}_{i\in I}$. For a collection of pointed spaces $\underline{X}=\{X_i\}_{i=1}^m$, let $(C\underline{X},\underline{X})=\{(CX_i,X_i)\}_{i=1}^m$. The polyhedral product $Z_K(C\underline{X},\underline{X})$ is particularly important. Indeed, the moment-angle complex and the real moment-angle complex for $K$ are defined by
  \[
    Z_K=Z_K(D^2,S^1)\quad\text{and}\quad\R Z_K=Z_K(D^1,S^0),
  \]
  which play the fundamental role in toric topology. We refer to a comprehensive survey \cite{BBC} for basic properties of polyhedral products.

  Next we define the fat-wedge filtration of $Z_K(\underline{X},\underline{A})$. Let
  \[
    Z_K^i(\underline{X},\underline{A})=\{(x_1,\ldots,x_m)\in Z_K(\underline{X},\underline{A})\mid\text{at least }m-i\text{ of }x_k\text{ are basepoints}\}
  \]
  for $0\le i\le m$. Clearly,
  \[
    Z_K^i(\underline{X},\underline{A})=\bigcup_{I\subset[m],\,|I|=i}Z_{K_I}(\underline{X}_I,\underline{A}_I).
  \]
  The following is proved in \cite[Theorem 3.1]{IK3}.

  \begin{theorem}
    \label{cone decomposition}
    For each $\emptyset\ne I\subset[m]$, there is a map $\varphi_{K_I}\colon|K|\to\R Z_{K_I}^{|I|-1}$ such that
    \[
      \R Z_K^i=\R Z_K^{i-1}\bigcup_{I\subset[m],\,|I|=i}C|K_I|
    \]
    where the attaching maps are $\varphi_{K_I}$.
  \end{theorem}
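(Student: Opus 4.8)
The plan is to reduce the statement to the top step of the filtration for each full subcomplex $K_I$, and then to handle that step by hand in a cubical model. Throughout I realize $\R Z_K$ as $\{x\in[-1,1]^m : \{i:|x_i|<1\}\in K\}\subset(D^1)^m$, taking $D^1=[-1,1]$, $S^0=\{\pm1\}$ and basepoint $1$, so that $\R Z_K^i=\bigcup_{|I|=i}\R Z_{K_I}$ and, for $1\le i\le m$, also $\R Z_K^{i-1}=\bigcup_{|I|=i}\R Z_{K_I}^{i-1}$.

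First I would record two elementary facts. In this model $\R Z_{K_I}=\{x : x_k=1\text{ whenever }k\notin I,\ \{i:|x_i|<1\}\in K\}$, from which $\R Z_{K_I}\cap\R Z_{K_J}=\R Z_{K_{I\cap J}}$; in particular $\R Z_{K_I}\cap\R Z_K^{i-1}=\R Z_{K_I}^{i-1}$ and $\R Z_{K_I}\cap\R Z_{K_J}\subset\R Z_K^{i-1}$ for distinct $i$-subsets $I,J$. Hence the passage from $\R Z_K^{i-1}$ to $\R Z_K^i$ splits into independent cone attachments indexed by the $i$-subsets $I$, and it suffices to prove that each $\R Z_{K_I}$ is obtained from $\R Z_{K_I}^{i-1}$ by attaching a cone $C|K_I|$ along some $\varphi_{K_I}\colon|K_I|\to\R Z_{K_I}^{i-1}$. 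Restricting to the vertex set $I$ (still without ghost vertices), this is the case $I=[m]$: show $\R Z_K=\R Z_K^{m-1}\cup_{\varphi_K}C|K|$.

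For $I=[m]$ I would use $\R Z_K^{m-1}=\{x\in\R Z_K : x_j=1\text{ for some }j\}$ together with the ``corner-of-cube'' realization $|K|=\{z\in[0,1]^m : \mathrm{supp}(z)\in K,\ \|z\|_\infty=1\}$ and $C|K|=\{z\in[0,1]^m : \mathrm{supp}(z)\in K\}$ (cone point $0$, base $|K|$), where $\mathrm{supp}(z)=\{i:z_i\ne0\}$. Put $g\colon C|K|\to[-1,1]^m$, $g(z)=2z-(1,\dots,1)$, and $\varphi_K=g|_{|K|}$. The verifications, in order, are: (a) $g(C|K|)\subset\R Z_K$, since $\{i:|g(z)_i|<1\}=\{i:0<z_i<1\}\subset\mathrm{supp}(z)\in K$; (b) $g^{-1}(\R Z_K^{m-1})=|K|$, since $g(z)_j=1\iff z_j=1$, so $\varphi_K$ does land in $\R Z_K^{m-1}$; (c) $g$ restricts to a homeomorphism $C|K|\setminus|K|\to\R Z_K\setminus\R Z_K^{m-1}$, with inverse $x\mapsto\tfrac12(x+(1,\dots,1))$; (d) $g(C|K|)\cup\R Z_K^{m-1}=\R Z_K$, because an $x\in\R Z_K$ with no coordinate equal to $1$ satisfies $\{i:x_i\ne-1\}=\{i:|x_i|<1\}\in K$, so $x\in g(C|K|)$. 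Granting (a)--(d), the map $C|K|\sqcup\R Z_K^{m-1}\to\R Z_K$ defined by $g$ and the inclusion is a continuous surjection from a compact space to a Hausdorff space, hence a quotient map, whose only identifications are $z\sim\varphi_K(z)$ for $z\in|K|$; this is exactly the pushout presenting $\R Z_K^{m-1}\cup_{\varphi_K}C|K|$. Reassembling over the $i$-subsets then yields the theorem.

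I expect the reduction to be routine bookkeeping with polyhedral products, and the heart of the proof to be the choice of model together with step (d). The subtle point is that the outer endpoint $-1$ of $D^1$ must be regarded as lying in the interior of the cone while the basepoint $1$ governs the fat wedge; this forces one to compare $\mathrm{supp}(z)$ with $\{i:0<z_i<1\}$ and $\{i:x_i\ne-1\}$ with $\{i:|x_i|<1\}$, and it is only because we ask for $g(C|K|)\cup\R Z_K^{m-1}=\R Z_K$ rather than $g(C|K|)=\R Z_K$ that the affine map $g$ suffices. The main obstacle is thus to pin down a model in which the attaching behaviour of $C|K_I|$ is captured by a single map into the fat wedge, and then to confirm that the pushout topology agrees with the subspace topology of $\R Z_K$, for which the compactness argument above is the clean route.
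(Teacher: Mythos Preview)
The paper does not supply a proof of this theorem: it is quoted verbatim from \cite[Theorem 3.1]{IK3}, so there is no in-paper argument to compare against. Your write-up is therefore a self-contained substitute for the cited result rather than a paraphrase of anything here.

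That said, your argument is correct and is essentially the explicit cubical construction that underlies the cited theorem. The reduction to $I=[m]$ via $\R Z_{K_I}\cap\R Z_{K_J}=\R Z_{K_{I\cap J}}$ is sound, and the four checks (a)--(d) on the affine map $g(z)=2z-(1,\dots,1)$ all go through as you state; in particular the key step (d) hinges exactly on the observation you isolate, namely that for $x$ with no coordinate equal to $1$ one has $\{i:x_i\ne -1\}=\{i:|x_i|<1\}\in K$. The compactness/Hausdorff argument cleanly identifies the pushout with $\R Z_K$. One cosmetic point: the displayed domain $|K|$ in the statement of the theorem is a typo in the paper for $|K_I|$, and you silently correct this; that is the right reading and is consistent with how $\varphi_{K_I}$ is used later (e.g.\ in Lemma~\ref{naturality} and Proposition~\ref{2->3}). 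You might also remark, for completeness, that your ``corner-of-cube'' model $\{z\in[0,1]^m:\mathrm{supp}(z)\in K\}$ really is homeomorphic to $C|K|$ as a pair, since this identification is what makes the symbol $C|K_I|$ in the theorem literal rather than just up to homotopy; a radial rescaling between the $\ell^\infty$ and $\ell^1$ unit balls on each cube $[0,1]^\sigma$ does the job.
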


  By the construction \cite[Section 5]{IK3} of $\varphi_K$, we have the following naturality.

  \begin{lemma}
    \label{naturality}
    Let $L$ be a subcomplex of $K$ such that the vertex set of $L$ is the same as $K$. Then there is a commutative diagram
    \[
      \xymatrix{
        |L|\ar[r]^{\varphi_L}\ar[d]&\R Z_L^{m-1}\ar[d]\\
        |K|\ar[r]^{\varphi_K}&\R Z_K^{m-1}.
      }
    \]
  \end{lemma}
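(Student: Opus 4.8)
The plan is to unwind the construction of $\varphi_K$ from \cite[Section 5]{IK3} and check that each stage of that construction is natural with respect to the inclusion $L\hookrightarrow K$. Recall that $\varphi_K\colon|K|\to\R Z_K^{m-1}$ is built by induction on the skeleta of $K$, using at each step a compatible system of cone-attaching data coming from Theorem \ref{cone decomposition}; concretely, one writes $|K|$ as the union over its maximal faces (or over a suitable filtration) of cones $C|K_I|$ and maps each such cone into the corresponding piece $\R Z_{K_I}^{|I|-1}\subset\R Z_K^{m-1}$ via $\varphi_{K_I}$ composed with the inclusion. Since $L$ has the same vertex set as $K$, for every $I\subset[m]$ we have $L_I\subset K_I$ as subcomplexes on the same vertex set, so the inductive construction for $L$ sits inside that for $K$ face by face.

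First I would set up the base case: on $0$-skeleta the map $\varphi$ is the inclusion of the (common) vertex set into $\R Z^{m-1}$, and the square commutes trivially. Next, for the inductive step, I would use the explicit formula for $\varphi_{K}$ on a cone $C|K_I|$ in terms of the lower filtration stages, together with the naturality of the cone decomposition in Theorem \ref{cone decomposition} with respect to $L_I\hookrightarrow K_I$; the key point is that the attaching maps $\varphi_{L_I}$ and $\varphi_{K_I}$ are compatible under the inclusions $\R Z_{L_I}^{|I|-1}\hookrightarrow\R Z_{K_I}^{|I|-1}$, which is exactly the inductive hypothesis applied to the pair $(L_I,K_I)$ of complexes on the vertex set $I$. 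Then I would assemble these compatible maps on cones into the commuting square for $\varphi_L$ and $\varphi_K$, using that $|L|\to|K|$ and $\R Z_L^{m-1}\to\R Z_K^{m-1}$ are both induced by the same inclusion data.

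The main obstacle I anticipate is purely bookkeeping rather than conceptual: the construction in \cite[Section 5]{IK3} involves choices (of homeomorphisms between cones, of collars, of an ordering of faces), and one must verify that these choices can be made simultaneously for $L$ and $K$ — i.e. that the choices for $L$ are the restrictions of those for $K$. This is possible precisely because $L$ and $K$ share a vertex set, so every face of $L$ is a face of $K$ and the combinatorial data restricts; but writing it out carefully requires tracking the construction through its inductive definition. Once that compatibility of choices is granted, the commutativity of the square is immediate from the definitions.
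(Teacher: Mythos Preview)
Your overall strategy---check that the explicit construction of $\varphi_K$ in \cite[Section 5]{IK3} is natural with respect to subcomplex inclusions on the same vertex set---is exactly what the paper does; indeed, the paper offers no proof beyond the sentence ``By the construction \cite[Section 5]{IK3} of $\varphi_K$, we have the following naturality.''

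That said, your description of the construction itself is confused, and this matters because your proposed argument rests on it. You write that ``one writes $|K|$ as the union over its maximal faces of cones $C|K_I|$ and maps each such cone into $\R Z_{K_I}^{|I|-1}$ via $\varphi_{K_I}$.'' But the cone decomposition of Theorem \ref{cone decomposition} is a decomposition of $\R Z_K$, not of $|K|$; and $\varphi_K$ is \emph{one of} those attaching maps (the one for $I=[m]$), not something assembled inductively from the smaller $\varphi_{K_I}$. So your inductive scheme is circular: you are proposing to build $\varphi_K$ out of the $\varphi_{K_I}$ and to deduce its naturality from theirs, when in fact each $\varphi_{K_I}$ is defined directly, in parallel, by an explicit simplexwise formula on $|K_I|$ with values in the cube $(D^1)^{|I|}$.

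Once you consult that actual formula in \cite[Section 5]{IK3}, the lemma is immediate and requires none of the bookkeeping you anticipate: since $L$ and $K$ share the vertex set $[m]$, every simplex $\sigma$ of $L$ is a simplex of $K$, and the value of $\varphi_L$ on $|\sigma|$ is literally the same expression in $(D^1)^m$ as the value of $\varphi_K$ on $|\sigma|$. The right vertical map $\R Z_L^{m-1}\hookrightarrow\R Z_K^{m-1}$ is the inclusion of subsets of $(D^1)^m$, so the square commutes on the nose, not merely up to homotopy, and no compatible choices need to be made.
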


  We say that the fat-wedge filtration of $\R Z_K$ is trivial if $\varphi_{K_I}$ is null-homotopic for each $\emptyset\ne I\subset[m]$. The main property of the fat-wedge filtration that we use is the following \cite[Theorem 1.2]{IK3}, and Theorem \ref{Z_K Tor} is its immediate corollary.

  \begin{theorem}
    \label{decomposition}
    If the fat-wedge filtration of $\R Z_K$ is trivial, then for any $\underline{X}=\{X_i\}_{i=1}^m$, there is a homotopy equivalence
    \[
      Z_K(C\underline{X},\underline{X})\simeq\bigvee_{\emptyset\ne I\subset[m]}|\Sigma K_I|\wedge\widehat{X}^I
    \]
    where $\widehat{X}^I=\bigwedge_{i\in I}X_i$.
  \end{theorem}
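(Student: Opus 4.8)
The plan is to induct along the fat-wedge filtration
\[
  \ast=Z_K^0(C\underline{X},\underline{X})\subset Z_K^1(C\underline{X},\underline{X})\subset\cdots\subset Z_K^m(C\underline{X},\underline{X})=Z_K(C\underline{X},\underline{X}),
\]
using the version of Theorem \ref{cone decomposition} with an arbitrary $\underline{X}$ in place of $S^0$. So the first step is to establish that version: for each $1\le i\le m$, the space $Z_K^i(C\underline{X},\underline{X})$ is obtained from $Z_K^{i-1}(C\underline{X},\underline{X})$ by attaching, for every $I\subset[m]$ with $|I|=i$, a cone on $|K_I|\wedge\widehat{X}^I$ along a map
\[
  \widehat{\varphi}_{K_I}\colon|K_I|\wedge\widehat{X}^I\longrightarrow Z_{K_I}^{|I|-1}(C\underline{X}_I,\underline{X}_I)\hookrightarrow Z_K^{i-1}(C\underline{X},\underline{X})
\]
which is obtained from $\varphi_{K_I}$ by a natural smashing construction; concretely $\widehat{\varphi}_{K_I}$ should factor as $\varphi_{K_I}\wedge\mathrm{id}_{\widehat{X}^I}$ followed by a natural map into $Z_{K_I}^{|I|-1}(C\underline{X}_I,\underline{X}_I)$ given by letting the cone coordinates act on the smash factors, so that $\varphi_{K_I}\simeq\ast$ forces $\widehat{\varphi}_{K_I}\simeq\ast$. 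This is proved by the same argument as \cite[Theorem 3.1]{IK3}: present $Z_K(C\underline{X},\underline{X})$ as a suitable homotopy pushout, identify the homotopy cofibre of the inclusion of the previous filtration stage, and unwind the explicit construction of $\varphi$ from \cite[Section 5]{IK3}; the naturality in Lemma \ref{naturality} is what lets the attaching map for the full subcomplex $K_I$ be expressed through $\varphi_{K_I}$ rather than $\varphi_K$.

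Granting this, the rest is formal. By hypothesis $\varphi_{K_I}$ is null-homotopic for every $\emptyset\ne I\subset[m]$, hence so is $\widehat{\varphi}_{K_I}$ as a map into $Z_K^{i-1}(C\underline{X},\underline{X})$. Attaching a reduced cone along a based null-homotopic map splits off a reduced suspension, so
\[
  Z_K^i(C\underline{X},\underline{X})\simeq Z_K^{i-1}(C\underline{X},\underline{X})\vee\bigvee_{|I|=i}\Sigma\bigl(|K_I|\wedge\widehat{X}^I\bigr)
\]
for every $i$, and $\Sigma(|K_I|\wedge\widehat{X}^I)=|\Sigma K_I|\wedge\widehat{X}^I$. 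Iterating from $Z_K^0(C\underline{X},\underline{X})=\ast$ up to $Z_K^m(C\underline{X},\underline{X})=Z_K(C\underline{X},\underline{X})$ accumulates exactly one summand $|\Sigma K_I|\wedge\widehat{X}^I$ for each $\emptyset\ne I\subset[m]$, which is the claimed homotopy equivalence.

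The main obstacle is the first step: proving the $\underline{X}$-parametrised analogue of Theorem \ref{cone decomposition} and pinning down its attaching maps. The delicate points are the bookkeeping of basepoints, since $Z_K(C\underline{X},\underline{X})$ is glued out of the cones $CX_i$ and one must compare these with the reduced cones $D^1\wedge X_i$ coherently over all of $[m]$ and compatibly with passage to full subcomplexes; and verifying that the squares presenting the filtration $\{Z_K^i(C\underline{X},\underline{X})\}$ are genuine homotopy pushouts, so that the attached piece is really a cone on $|K_I|\wedge\widehat{X}^I$ and not something weaker. Once the attaching maps have been identified, the triviality hypothesis on $\R Z_K$ finishes everything automatically, precisely because smashing a null-homotopic map with an identity---and then composing with any map---remains null-homotopic.
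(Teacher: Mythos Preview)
The paper does not actually prove this statement: it is quoted as \cite[Theorem 1.2]{IK3} and used as a black box. Your sketch is essentially the strategy carried out in \cite{IK3}: one proves an $\underline{X}$-parametrised cone decomposition of the fat-wedge filtration in which the attaching map for the $I$-th piece factors through $\varphi_{K_I}$ smashed with the identity on $\widehat{X}^I$, and then the triviality hypothesis kills all attaching maps so each stage splits as a wedge. You have correctly located the nontrivial content in the first step---the identification of the cofibre of $Z_K^{i-1}(C\underline{X},\underline{X})\hookrightarrow Z_K^i(C\underline{X},\underline{X})$ with $\bigvee_{|I|=i}|\Sigma K_I|\wedge\widehat{X}^I$ and the factorisation of the attaching map through $\varphi_{K_I}$---and the basepoint and pushout issues you flag are precisely the ones handled in \cite[Sections 3--6]{IK3}.
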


  To show that the map $\varphi_K$ is null-homotopic, the following property is useful, which is proved in the proof of \cite[Theorem 7.2]{IK3}. Recall that a subset $\sigma\subset[m]$ is called a minimal non-face of $K$ if $\sigma\not\in K$ and $\sigma-i\in K$ for all $i\in\sigma$.

  \begin{lemma}
    \label{factorization}
    Let $\widehat{K}$ be a simplicial complex obtained by adding all minimal non-faces to $K$. Then the map $\varphi_K$ factors through the inclusion $|K|\to|\widehat{K}|$.
  \end{lemma}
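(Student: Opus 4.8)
The plan is to build a map $\psi\colon|\widehat K|\to\R Z_K^{m-1}$ with $\psi|_{|K|}=\varphi_K$; this is exactly a factorization of $\varphi_K$ through the inclusion $|K|\hookrightarrow|\widehat K|$. First I would observe that the simplices of $\widehat K$ not already in $K$ are precisely the minimal non-faces $\sigma_1,\dots,\sigma_r$ of $K$; these are distinct, and each has all of its proper faces in $K$, so that $|\widehat K|=|K|\cup\bigcup_j|\Delta^{\sigma_j}|$, the open simplices $\mathrm{int}\,|\Delta^{\sigma_j}|$ are pairwise disjoint and disjoint from $|K|$, and $|\Delta^{\sigma_j}|\cap|\Delta^{\sigma_k}|\subset|K|$ for $j\ne k$. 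Consequently it suffices to extend $\varphi_K$ over each cone $|\Delta^{\sigma_j}|\cong C|\partial\Delta^{\sigma_j}|$ separately and then glue, i.e.\ to show that $\varphi_K|_{|\partial\Delta^\sigma|}$ is null-homotopic in $\R Z_K^{m-1}$ for every minimal non-face $\sigma$.

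So I would fix a minimal non-face $\sigma$. Since $\sigma\notin K$ while every proper face of $\sigma$ lies in $K$, the full subcomplex $K_\sigma$ equals $\partial\Delta^\sigma$ and hence $|\partial\Delta^\sigma|=|K_\sigma|$. Next I would use the naturality of the cone decomposition of Theorem \ref{cone decomposition} with respect to inclusions of full subcomplexes — the companion of Lemma \ref{naturality}, coming from the explicit construction of $\varphi$ in \cite{IK3} — to factor $\varphi_K|_{|K_\sigma|}$ as the composite $|K_\sigma|\xrightarrow{\varphi_{K_\sigma}}\R Z_{K_\sigma}^{|\sigma|-1}\hookrightarrow\R Z_K^{m-1}$. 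If $\sigma\ne[m]$, so $|\sigma|\le m-1$, then Theorem \ref{cone decomposition} applied at stage $|\sigma|$ attaches the cone $C|K_\sigma|$ to $\R Z_K^{|\sigma|-1}$ along $\varphi_{K_\sigma}$, and this cone lies inside $\R Z_K^{|\sigma|}\subseteq\R Z_K^{m-1}$; it is precisely a null-homotopy of $\varphi_{K_\sigma}$ regarded as a map into $\R Z_K^{m-1}$. (Equivalently, $\R Z_{K_\sigma}^{|\sigma|-1}\subset\R Z_{K_\sigma}=\R Z_{\partial\Delta^\sigma}$, the boundary of an $|\sigma|$-cube, which is $(|\sigma|-2)$-connected, while $|K_\sigma|$ is an $(|\sigma|-2)$-sphere.) Thus $\varphi_K|_{|\partial\Delta^\sigma|}$ is null-homotopic in $\R Z_K^{m-1}$ and extends over $|\Delta^\sigma|$.

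The remaining possibility is that $[m]$ itself is a minimal non-face, which forces $K=\partial\Delta^{[m]}$, makes $[m]$ the unique minimal non-face, and makes $\widehat K=\Delta^{[m]}$ contractible. There I would argue directly that $\R Z_K=\R Z_{\partial\Delta^{[m]}}$ is the boundary of the $m$-cube and $\R Z_K^{m-1}$ is the union of its $m$ facets through the basepoint vertex, which is star-shaped, hence contractible, so $\varphi_K$ is null-homotopic. Finally, gluing the extensions over the $|\Delta^{\sigma_j}|$ with $\varphi_K$ on $|K|$ — they agree on pairwise overlaps, which lie in $|K|$ — produces $\psi$ with $\varphi_K=\psi\circ(|K|\hookrightarrow|\widehat K|)$. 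I expect the main obstacle to be establishing the full-subcomplex naturality and identifying the cone furnished by Theorem \ref{cone decomposition} as exactly the null-homotopy required; the combinatorial facts about $\widehat K$ and the degenerate case $K=\partial\Delta^{[m]}$ are routine.
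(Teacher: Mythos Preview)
The paper does not give its own proof of this lemma; it simply records that the statement is established in the course of the proof of \cite[Theorem 7.2]{IK3}. Your argument is a faithful reconstruction of what that proof contains: you correctly observe that $|\widehat K|$ is obtained from $|K|$ by attaching, for each minimal non-face $\sigma$, the cell $|\Delta^\sigma|$ along $|\partial\Delta^\sigma|=|K_\sigma|$, and you reduce the factorization to showing that $\varphi_K\vert_{|K_\sigma|}$ is null-homotopic in $\R Z_K^{m-1}$. The step you flag as the main obstacle---that $\varphi_K\vert_{|K_\sigma|}$ coincides with the composite $|K_\sigma|\xrightarrow{\varphi_{K_\sigma}}\R Z_{K_\sigma}^{|\sigma|-1}\hookrightarrow\R Z_K^{m-1}$---is indeed the heart of the matter and genuinely needs the explicit construction of $\varphi$ from \cite[Section~5]{IK3}, not the same-vertex-set naturality of Lemma~\ref{naturality}. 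Once that is granted, either of your two null-homotopy arguments (the cone supplied by Theorem~\ref{cone decomposition} at level $|\sigma|$, or passing to $\R Z_{K_\sigma}=\R Z_{\partial\Delta^\sigma}\cong S^{|\sigma|-1}$) handles the generic case $\sigma\ne[m]$, and your treatment of $K=\partial\Delta^{[m]}$ is correct. So there is nothing to compare against in the paper proper, and your sketch is sound and in line with the argument being cited.
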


  We will use the following criterion for triviality of the fat-wedge filtration of $\R Z_K$ \cite[Theorem 1.6]{IK3}.

  \begin{theorem}
    \label{neighborly}
    If $K$ is $\lceil\frac{\dim K}{2}\rceil$-neighborly, then the fat-wedge filtration of $\R Z_K$ is trivial.
  \end{theorem}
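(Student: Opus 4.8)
The plan is to induct on the filtration degree and show that each attaching map $\varphi_{K_I}\colon |K_I| \to \R Z_{K_I}^{|I|-1}$ is null-homotopic, which is exactly the triviality of the fat-wedge filtration. By Lemma \ref{naturality} it suffices to treat the top map $\varphi_K\colon |K|\to \R Z_K^{m-1}$, since every $\varphi_{K_I}$ is obtained by restricting to a full subcomplex and a full subcomplex of a $\lceil\frac{\dim K}{2}\rceil$-neighborly complex is again $\lceil\frac{\dim K_I}{2}\rceil$-neighborly (the neighborliness hypothesis only gets stronger relative to dimension when we pass to full subcomplexes, because $\dim K_I \le \dim K$). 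So the real content is: if $K$ on $[m]$ is $\lceil\frac{\dim K}{2}\rceil$-neighborly, then $\varphi_K$ is null-homotopic.

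First I would invoke Lemma \ref{factorization}: $\varphi_K$ factors through the inclusion $|K|\hookrightarrow|\widehat{K}|$, where $\widehat{K}$ is obtained from $K$ by adding all its minimal non-faces. The key point is a dimension estimate on the minimal non-faces. If $\sigma$ is a minimal non-face of $K$, then every proper subset of $\sigma$ lies in $K$; in particular, writing $n = \dim K$, any subset of $\sigma$ of size $n+1$ is a face, so $|\sigma| - 1 > n$, i.e. $|\sigma| \ge n+2$. Combined with $\lceil\frac{n}{2}\rceil$-neighborliness — every subset of size $\lceil\frac n2\rceil + 1$ is a face — a minimal non-face $\sigma$ must satisfy $|\sigma| \ge \lceil\frac n2\rceil + 2$ as well; more useful is that $\widehat K$ contains the full $\lceil\frac n2\rceil$-skeleton of the simplex on $[m]$ and all faces of $K$, so $\widehat K$ is highly connected. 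Concretely, adding a minimal non-face $\sigma$ of dimension $d = |\sigma|-1 \ge n+1$ to a complex only affects homotopy groups in degrees $\ge d-1 \ge n$. Since $|K|$ is at most $n$-dimensional, I would show that the inclusion $|K|\hookrightarrow|\widehat K|$ is null-homotopic, or rather that $\varphi_K$, already landing in $\R Z_K^{m-1}$, becomes null after this factorization because $|\widehat K|$ can be built from $|K|$ by attaching cells of dimension $\ge n+1$ and the composite into $\R Z_K^{m-1}$ kills the relevant classes.

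The cleaner route, and the one I would actually pursue, is to argue that $|\widehat K|$ is contractible, or at least $n$-connected, so that the composite $|K|\to|\widehat K|\to \R Z_K^{m-1}$ is null-homotopic on the $n$-skeleton $|K|$. Here the neighborliness is doing its job: $\widehat K$ contains the full $\lceil\frac n2\rceil$-skeleton $\Delta^{[m]}_{(\lceil n/2\rceil)}$ of the simplex, and every minimal non-face has dimension $\ge n+1 > 2\lceil\frac n2\rceil \ge n$; a standard argument (building $\widehat K$ from the full skeleton by attaching simplices of dimension $> n$, which are all fillable since their boundaries already lie in the full skeleton and bound) shows $\widehat K$ is $n$-connected. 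Hence any map from the $n$-complex $|K|$ into anything factoring through $|\widehat K|$ — in particular $\varphi_K$ — is null-homotopic. Running this for all $K_I$ gives triviality of the fat-wedge filtration.

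The main obstacle I anticipate is the $n$-connectivity claim for $\widehat K$: one must verify carefully that after adding a first batch of minimal non-faces to $K$, the resulting complex — which is what the definition of $\widehat K$ really gives (a single simultaneous addition, not an iterated Alexander-type operation) — is obtained from the full $\lceil\frac n2\rceil$-skeleton by coning off boundaries of simplices of dimension at least $n+1$, and that this preserves $(\,n\,)$-connectivity. The subtlety is that the minimal non-faces of $K$ need not be disjoint and their boundaries interact; the right framework is to observe $\widehat K \supseteq \Delta^{[m]}_{(\lceil n/2 \rceil)}$ and $\widehat K \subseteq \Delta^{[m]}$, and that the pair $(\Delta^{[m]}, \widehat K)$ is $(n+1)$-connected because every missing face of $\widehat K$ (a non-face of $K$ that is not minimal) has dimension $> n+1$. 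A short spectral-sequence or Mayer–Vietoris bookkeeping argument then yields $\widetilde H_i(\widehat K)=0$ for $i\le n$ and simple connectivity, hence $n$-connectivity, completing the proof.
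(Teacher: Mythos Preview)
The paper does not prove Theorem~\ref{neighborly}; it is quoted from \cite[Theorem~1.6]{IK3}, and Lemma~\ref{factorization} is itself extracted from the proof of \cite[Theorem~7.2]{IK3}. So there is no in-paper argument to compare against, but your sketch contains a genuine gap that would prevent it from standing on its own.

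The central claim that a minimal non-face $\sigma$ of $K$ satisfies $|\sigma|\ge n+2$ (equivalently $\dim\sigma\ge n+1$), where $n=\dim K$, has the inequality backwards. From ``every proper subset of $\sigma$ lies in $K$'' and $\dim K=n$ one gets only that the largest proper subset of $\sigma$ has size at most $n+1$, i.e.\ $|\sigma|-1\le n+1$, hence $\dim\sigma\le n+1$. The neighborliness hypothesis supplies the \emph{lower} bound $\dim\sigma\ge\lceil n/2\rceil+1$. So minimal non-faces sit in the range $\lceil n/2\rceil+1\le\dim\sigma\le n+1$, not uniformly at the top. This breaks both your assertion that $\widehat K$ is $n$-connected and the claim in your final paragraph that every missing face of $\widehat K$ has dimension $>n+1$.

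A concrete failure: take $n=2$, vertex set $\{1,\dots,5\}$, all ten edges, and all ten triangles except $\{3,4,5\}$. Then $K$ is $1$-neighborly and $2$-dimensional, and $\{3,4,5\}$ is a minimal non-face of dimension $2<n+1=3$. One checks that $\widehat K=\mathrm{sk}_2\Delta^4\cup\{1234,1235,1245\}$ and $H_2(\widehat K;\Z)\cong\Z$, so $\widehat K$ is not $2$-connected. What \emph{does} hold in this example is that the inclusion $|K|\hookrightarrow|\widehat K|$ is null-homotopic even though $|\widehat K|$ is not highly connected: every $2$-cycle of $K$ is a $\Z$-combination of boundaries of those three tetrahedra, hence dies in $\widehat K$. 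The argument in \cite{IK3} proceeds along these lines---showing the inclusion $|K|\to|\widehat K|$ itself is null-homotopic under the neighborliness hypothesis---rather than via a connectivity estimate on $\widehat K$, and that step needs more than the skeleton-counting you propose.
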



  \section{Vertex-breakability}

  In this section, we prove a relation between vertex-breakability and Golodness. To this end, we recall a combinatorial description of the product in $\Tor_*^{R[v_1,\ldots,v_m]}(R[K],R)$. By the classical theorem of Hochster, there is an isomorphism of $R$-modules
  \[
    \Tor^{R[v_1,\ldots,v_m]}_i(R[K],R)\cong\bigoplus_{\emptyset\ne I\subset[m]}\widetilde{H}^{i-|I|-1}(K_I;R).
  \]
  It is remarkable that Baskakov, Buchstaber and Panov \cite{BBP} proved that the product in $\Tor^{R[v_1,\ldots,v_m]}_*(R[K],R)$ is nicely described through this isomorphism as follows. For disjoint simplicial complexes $K,L$, let $K*L$ denote the join of $K$ and $L$, that is,
  \[
    K*L=\{\sigma\sqcup\tau\mid\sigma\in K,\,\tau\in L\}.
  \]
  Then $|K*L|=|K|*|L|\simeq\Sigma|K|\wedge|L|$. For $\emptyset\ne I,J\subset[m]$, we define a map $m_{I,J}\colon K_{I\cup J}\to K_I*K_J$ by
  \[
    m_{I,J}(\sigma)=\sigma_I\sqcup\sigma_J
  \]
  for $I\cap J=\emptyset$ and $m_{I,J}=*$ for $I\cap J\ne\emptyset$.

  \begin{theorem}
    \label{Hochster}
    The product in $\Tor^{R[v_1,\ldots,v_m]}_*(R[K],R)$ is identified with
    \[
      m_{I,J}^*\colon\widetilde{H}^{i-|I|-1}(K_I;R)\otimes\widetilde{H}^{j-|J|-1}(K_J;R)\to\widetilde{H}^{i+j-|I|-|J|-1}(K_{I\cup J};R).
    \]
  \end{theorem}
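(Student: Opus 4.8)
The plan is to work directly with the Koszul complex, which by Definition \ref{Golod} is the DGA whose cohomology ring is $\Tor^{R[v_1,\ldots,v_m]}_*(R[K],R)$ with exactly the product we must identify. Write $\Lambda=\Lambda_R[u_1,\ldots,u_m]\otimes_R R[K]$ with $|u_i|=1$, $|v_i|=2$, $du_i=v_i$, and exploit its $\Z^m$-multigrading, in which $u_i$ and $v_i$ both have multidegree $2e_i$. First I would recall Hochster's computation, but in a form that remembers how the isomorphism is built: the multidegree-$2a$ part of $\Lambda$ is acyclic unless $a=\mathrm{char}(I)$ for some $\emptyset\ne I\subseteq[m]$ (when some $a_k\ge 2$, multiplication by $v_k$ is invertible on the relevant multigraded pieces of $R[K]$, which contracts the $u_k$-direction there), and for $a=\mathrm{char}(I)$ the subcomplex $\Lambda_{(I)}$ has $R$-basis $\{\,u_{I\setminus\tau}\otimes v^\tau:\tau\in K_I\,\}$ with
\[
  d(u_{I\setminus\tau}\otimes v^\tau)=\sum_{i\in I\setminus\tau,\ \tau\cup i\in K_I}\pm\,u_{I\setminus(\tau\cup i)}\otimes v^{\tau\cup i},
\]
so that $\Lambda_{(I)}$ is, up to sign, the reduced simplicial cochain complex of $K_I$ with $u_{I\setminus\tau}\otimes v^\tau$ in cohomological degree $|\tau|-1$; since this class sits in total degree $|I|+|\tau|$, this reproduces $\Tor_i\cong\bigoplus_I\widetilde H^{i-|I|-1}(K_I;R)$.

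Next I would compute the product. Because $\Lambda$ is a DGA and its multigrading is multiplicative, the product carries $\Lambda_{(I)}\otimes\Lambda_{(J)}$ into the multidegree-$(2\,\mathrm{char}(I)+2\,\mathrm{char}(J))$ part. If $I\cap J\ne\emptyset$ that multidegree is not of the form $2\,\mathrm{char}(L)$, so it contributes nothing to $\Tor$; this matches $m_{I,J}=*$, which induces the zero map on reduced cohomology. If $I\cap J=\emptyset$ the target is $\Lambda_{(I\cup J)}$ and a direct computation gives
\[
  (u_{I\setminus\tau}\otimes v^\tau)\cdot(u_{J\setminus\rho}\otimes v^\rho)=\pm\,u_{(I\cup J)\setminus(\tau\cup\rho)}\otimes v^{\tau\cup\rho},
\]
which is the basis element of $\Lambda_{(I\cup J)}$ indexed by $\tau\cup\rho$ when $\tau\cup\rho\in K$ and is $0$ otherwise. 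Under the identifications above this is precisely the composite of the chain-level join cross product $\widetilde C^*(K_I)\otimes\widetilde C^*(K_J)\to\widetilde C^*(K_I*K_J)$ with restriction along the inclusion $m_{I,J}\colon K_{I\cup J}\hookrightarrow K_I*K_J$ (note $m_{I,J}(\sigma)=\sigma$ for $\sigma\subseteq I\cup J$, so $m_{I,J}$ really is this inclusion). Passing to cohomology yields $m_{I,J}^*$, and assembling over all pairs $(I,J)$ gives the theorem.

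The main obstacle is the sign bookkeeping: one must reconcile the Koszul sign in $u_S\cdot u_{S'}$, the sign in the simplicial coboundary, and the sign in the join cross product (equivalently, in $|K*L|\simeq\Sigma|K|\wedge|L|$), and do so compatibly for all $I$ at once so the two displayed computations genuinely glue. The clean device is to fix a total order on $[m]$, orient every simplex by it once and for all, and then pin the remaining global sign down on one small case, e.g. $m_{I,J}$ with $K_I,K_J$ single simplices. A secondary point, purely of phrasing, is that over a general commutative ring $R$ the join cross product need not be an isomorphism, so $m_{I,J}^*$ in the statement must be read as that chain-level cross product followed by $m_{I,J}^*$, not via a K\"unneth isomorphism — no flatness is used anywhere above. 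Alternatively one could argue topologically: by \eqref{Z_K} it suffices to compute cup products on $H^*(Z_K;R)$, and the canonical retraction $Z_K\to Z_{K_I}$ available for full subcomplexes, together with the inclusion $Z_{K_{I\cup J}}\hookrightarrow Z_K$, reduces the product of the $I$- and $J$-summands to the external product pulled back along $Z_{K_{I\cup J}}\xrightarrow{\Delta}Z_{K_I}\times Z_{K_J}$, which one then identifies with a suspension of $m_{I,J}$ using $\widehat Z_{K_I}\simeq\Sigma^{|I|+1}|K_I|$; the same sign issues recur.
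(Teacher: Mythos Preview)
The paper does not prove this theorem at all: it is stated as a result of Baskakov, Buchstaber and Panov \cite{BBP} and simply quoted, so there is no ``paper's own proof'' to compare against. Your sketch is essentially the standard argument from \cite{BBP}: compute the multigraded pieces of the Koszul DGA, identify the squarefree piece $\Lambda_{(I)}$ with the reduced simplicial cochains of $K_I$, and read off that the Koszul product lands in $\Lambda_{(I\cup J)}$ via the join/inclusion map $m_{I,J}$. The outline is correct, including your observation that for $I\cap J\ne\emptyset$ the product lands in a non-squarefree multidegree and hence dies in cohomology, and your caveat that over a general ring $R$ the map $m_{I,J}^*$ should be read as the chain-level cross product composed with restriction rather than via a K\"unneth isomorphism. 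The sign bookkeeping you flag is genuine but routine once one fixes the order on $[m]$ as you suggest; nothing in your plan would fail.
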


  We consider a map which is trivial in cohomology.

  \begin{lemma}
    \label{dual trivial}
    Let $f\colon X\to Y$ be a map between CW-complexes of finite type. If a map $f^*\colon H^n(Y;R)\to H^n(X;R)$ is trivial for any ring $R$, then the map $f_*\colon H_n(X;A)\to H_n(Y;A)$ is trivial for any finitely generated abelian group $A$.
  \end{lemma}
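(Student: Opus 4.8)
\textit{Proof proposal.} The plan is to deduce the lemma from the universal coefficient theorem together with the exactness of Pontryagin duality $(-)^{\vee}=\mathrm{Hom}_{\Z}(-,\mathbb{Q}/\mathbb{Z})$. We may assume $f$ is cellular, since both the hypothesis and the conclusion depend only on the homotopy class of $f$; we then work with the cellular chain complexes $C_{*}(X)$, $C_{*}(Y)$, which are finitely generated and free in each degree because $X$ and $Y$ are of finite type, together with the chain map $f_{\#}\colon C_{*}(X)\to C_{*}(Y)$. For a finitely generated abelian group $A$ put $\hat A=\mathrm{Hom}_{\Z}(A,\mathbb{Q}/\mathbb{Z})$. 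The first step is to produce a natural isomorphism
\[
  H^{n}(X;\hat A)\;\cong\;\mathrm{Hom}_{\Z}\bigl(H_{n}(X;A),\mathbb{Q}/\mathbb{Z}\bigr).
\]
Indeed, the tensor--hom adjunction gives $\mathrm{Hom}_{\Z}(C_{*}(X),\hat A)=\mathrm{Hom}_{\Z}(C_{*}(X)\otimes_{\Z}A,\mathbb{Q}/\mathbb{Z})$, and since $\mathbb{Q}/\mathbb{Z}$ is an injective abelian group the functor $\mathrm{Hom}_{\Z}(-,\mathbb{Q}/\mathbb{Z})$ is exact and hence commutes with the formation of (co)homology of a chain complex; therefore the degree-$n$ cohomology of $\mathrm{Hom}_{\Z}(C_{*}(X)\otimes_{\Z}A,\mathbb{Q}/\mathbb{Z})$ is $\mathrm{Hom}_{\Z}(H_{n}(C_{*}(X)\otimes_{\Z}A),\mathbb{Q}/\mathbb{Z})=\mathrm{Hom}_{\Z}(H_{n}(X;A),\mathbb{Q}/\mathbb{Z})$. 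All of this is natural in $X$, so under this isomorphism $f^{*}\colon H^{n}(Y;\hat A)\to H^{n}(X;\hat A)$ is identified with the Pontryagin dual of $f_{*}\colon H_{n}(X;A)\to H_{n}(Y;A)$.

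Since $\mathbb{Q}/\mathbb{Z}$ is an injective cogenerator of the category of abelian groups, a homomorphism of abelian groups vanishes as soon as its Pontryagin dual does; by the previous paragraph it therefore suffices to prove that $f^{*}=0$ on $H^{n}(-;\hat A)$. Writing $A\cong\Z^{r}\oplus\bigoplus_{i}\Z/n_{i}$ we have $\hat A\cong(\mathbb{Q}/\mathbb{Z})^{r}\oplus\bigoplus_{i}\Z/n_{i}$, and since $H^{n}(-;-)$ and $f^{*}$ are additive in the coefficient group, it is enough to show $f^{*}=0$ on $H^{n}(-;\Z/n_{i})$ and on $H^{n}(-;\mathbb{Q}/\mathbb{Z})$. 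The former holds by hypothesis, as each $\Z/n_{i}$ is a ring. For the latter, write $\mathbb{Q}/\mathbb{Z}$ as the filtered colimit $\varinjlim_{m}\Z/m$ over the divisibility poset; because $C_{n}(X)$ is finitely generated, $\mathrm{Hom}_{\Z}(C_{*}(X),-)$ commutes with this colimit, hence so does $H^{n}(X;-)$, giving a natural isomorphism $H^{n}(X;\mathbb{Q}/\mathbb{Z})\cong\varinjlim_{m}H^{n}(X;\Z/m)$. As $f^{*}$ vanishes on each $H^{n}(-;\Z/m)$ by hypothesis, it vanishes on the colimit, i.e.\ on $H^{n}(-;\mathbb{Q}/\mathbb{Z})$. (Note that only the rings $R=\Z/m$ enter the argument.) This completes the proof.

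I do not expect a serious obstacle: the argument is entirely formal once the correct reformulation is in place. The two points that require a little care are the naturality in $X$ of the isomorphism $H^{n}(X;\hat A)\cong\mathrm{Hom}_{\Z}(H_{n}(X;A),\mathbb{Q}/\mathbb{Z})$ — this is what lets us pass back and forth between $f^{*}$ and the dual of $f_{*}$ — and the commutation of $H^{n}(X;-)$ with the filtered colimit defining $\mathbb{Q}/\mathbb{Z}$, which is precisely where the finite-type hypothesis on $X$ is used. Both are standard and are verified directly on the cellular cochain complex.
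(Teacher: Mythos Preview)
Your argument is correct and genuinely different from the paper's. The paper reduces to the cyclic cases $A=\Z/p^{r}$ and $A=\Z$, handling the first via self-injectivity of $\Z/p^{r}$ and the second by the universal coefficient sequence with coefficients $B=\Z$ (to kill the free part of the image) and then $B=\Z/p^{r}$ for large $r$ (to kill the $p$-torsion part). Your route is more uniform: you dualise once with the injective cogenerator $\mathbb{Q}/\Z$, identify $f^{*}$ on $H^{n}(-;\hat A)$ with the Pontryagin dual of $f_{*}$, and then discharge the $\mathbb{Q}/\Z$-summand of $\hat A$ by writing it as the filtered colimit of the rings $\Z/m$; this is precisely where you invoke the finite-type hypothesis (to commute $\mathrm{Hom}(C_{n}(X),-)$ with filtered colimits), whereas the paper uses finite type only through the finite generation of $H_{n}(Y;\Z)$. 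Your approach has the pleasant by-product, which you note, that only the rings $R=\Z/m$ are needed in the hypothesis; the paper in fact needs only $R=\Z$ and $R=\Z/p^{r}$, so both proofs show that cyclic coefficient rings already suffice.
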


  \begin{proof}
    It is sufficient to prove the statement when $A$ is a cyclic group. First, we consider the case $A=\Z/p^r$. As in \cite[p. 239 - 240]{N}, $\Z/p^r$ is injective in the category of $\Z/p^r$-modules. Then
    \[
      H^*(Z;\Z/p^r)\cong\mathrm{Hom}(H_*(Z;\Z/p^r),\Z/p^r)
    \]
    for any space $Z$. This readily implies that $f$ is trivial in homology over $\Z/p^r$.

    Next, we consider the case $A=\Z$. By the universal coefficient theorem, for any abelian group $B$, there is a commutative diagram with exact rows:
    \[
      \xymatrix{
        0\ar[r]&\mathrm{Ext}(H_{n-1}(Y;\Z),B)\ar[r]\ar[d]^{(f_*)^*}&H^n(Y;B)\ar[r]\ar[d]^{f^*}&\mathrm{Hom}(H_n(Y;\Z),B)\ar[r]\ar[d]^{(f_*)^*}&0\\
        0\ar[r]&\mathrm{Ext}(H_{n-1}(X;\Z),B)\ar[r]&H^n(X;B)\ar[r]&\mathrm{Hom}(H_n(X;\Z),B)\ar[r]&0
      }
    \]
    Since the middle $f^*$ is trivial for $B=\Z$ by assumption, the right $(f_*)^*$ is trivial for $B=\Z$. For any finitely generated abelian group $C$, there is a natural isomorphism
    \[
      \mathrm{Hom}(C,\Z)\cong\mathrm{Hom}(C/\mathrm{Tor}(C),\Z)
    \]
    where $\mathrm{Tor}(C)$ denotes the torsion part of $C$. Then since $C/\mathrm{Tor}(C)$ is a free abelian group, the map $f_*\colon H_n(X;\Z)/\mathrm{Tor}(H_n(X;\Z))\to H_n(Y;\Z)/\mathrm{Tor}(H_n(Y;\Z))$ is trivial, implying $f_*(H_n(X;\Z))\subset\mathrm{Tor}(H_n(Y;\Z))$. On the other hand, the right $(f_*)^*$ is trivial for $B=\Z/p^r$, $r$ is arbitrary. Since $H_n(Y;\Z)$ is finitely generated, if we take $r$ large enough, then we can see that the $p$-torsion part $f_*(H_n(X;\Z))$ is trivial. Thus we obtain that $f_*\colon H_n(X;\Z)\to H_n(Y;\Z)$ is trivial.
  \end{proof}

  \begin{lemma}
    \label{vertex-breakable}
    Given a non-trivial finitely generated abelian group $A$, suppose that for two vertices $v,w$ of $K$, the map
    \begin{equation}
      \label{dl}
      (i_v)_*\oplus(i_w)_*\colon H_n(\dl_K(v);A)\oplus H_n(\dl_K(w);A)\to H_n(K;A)
    \end{equation}
    is not surjective. Then $\{v,w\}$ is an edge of $K$ if and only if the map
    \[
      (m_{I,J})_*\colon H_n(K;A)\to H_n(K_I*K_J;A)
    \]
    is trivial for $I=\{v,w\}$ and $J=[m]-\{v,w\}$.
  \end{lemma}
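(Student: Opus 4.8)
The plan is to analyze the map $m_{I,J}\colon K_{I\cup J}\to K_I * K_J$ for the specific splitting $I=\{v,w\}$, $J=[m]-\{v,w\}$, noting $I\cup J=[m]$, so $K_{I\cup J}=K$, and keeping track of how the two cases in the definition of $m_{I,J}$ interact with the hypothesis. The key structural observation is that $K_J=\dl_K(\{v,w\})$ is a subcomplex of both $\dl_K(v)$ and $\dl_K(w)$, and there are natural factorizations relating $K_I * K_J$ to the pieces whose homologies appear in \eqref{dl}; I would set these up first.

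First I would dispose of the case $\{v,w\}\notin K$. Then $I=\{v,w\}$ is a non-face, so on simplices $\sigma\in K$ we have $\sigma_I\in K_I$ only when $\sigma_I$ is a proper subset of $\{v,w\}$ — i.e. $K_I$ is a pair of disjoint points (or less). If $\{v,w\}$ is not an edge, then in fact every $\sigma\in K$ with $v\in\sigma$ has $w\notin\sigma$ and vice versa, so $K = \dl_K(v)\cup\dl_K(w)$ as a union of subcomplexes, and $H_n(K;A)$ is generated by the images of $(i_v)_*$ and $(i_w)_*$ — i.e. \eqref{dl} is surjective, contradicting the hypothesis. Hence under the standing hypothesis that \eqref{dl} is \emph{not} surjective, $\{v,w\}$ must already be an edge, and the "only if" direction is vacuous while the "if" direction requires us to prove $(m_{I,J})_*$ is trivial. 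So the real content is: \emph{assuming} \eqref{dl} is not surjective (which forces $\{v,w\}\in K$), show $(m_{I,J})_*\colon H_n(K;A)\to H_n(K_I*K_J;A)$ is trivial.

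For that, I would use the homotopy decomposition $|K_I * K_J| = |K_I| * |K_J| \simeq \Sigma|K_I|\wedge|K_J|$. Since $\{v,w\}\in K$, $|K_I|=S^0$ (two points $v,w$), so $K_I * K_J$ is the (unreduced) suspension of $|K_J| = |\dl_K(\{v,w\})|$, with the two suspension points being $v$ and $w$. Under $m_{I,J}$, a simplex $\sigma\in K$ maps to $\sigma_I \sqcup \sigma_J$: if $\sigma$ contains neither $v$ nor $w$ it lands in $K_J$, if it contains $v$ (but not $w$) it lands in $v*K_J$, similarly for $w$, and if it contains both $v$ and $w$ it maps to $\{v,w\}*\sigma_J$ — but $\{v,w\}\notin K_I$, so such a simplex maps to the point $*$. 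Thus $m_{I,J}$ factors through the quotient of $K$ collapsing the star of $\{v,w\}$, and one checks that up to homotopy $m_{I,J}$ is (homotopic to) the natural map $|K| \to |K|/|\dl_K(v)\cup\dl_K(w)| \cong S^0 * (\text{something})$ — more precisely, the composite of the quotient $q\colon|K|\to|K|/(|\dl_K(v)|\cup|\dl_K(w)|)$ with a map to $|K_I*K_J|$. The point is that on $H_n$, the image of $(m_{I,J})_*$ lies in the image of $q_*$, and $q_*$ on $H_n$ is, by the long exact sequence of the pair and the hypothesis that \eqref{dl} is not surjective, \ldots hmm, I need the hypothesis to kill \emph{all} of $H_n(K;A)$ under $q_*$, not just produce a nonzero cokernel. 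Let me reconsider: the correct factorization should be through $|K|/(|\dl_K(v)| \cup |\dl_K(w)|)$ only in a range, and the actual claim is that $(m_{I,J})_*$ is the composite $H_n(K;A)\to H_n(K, \dl_K(v)\cup\dl_K(w);A)\to H_n(K_I*K_J;A)$; the middle group need not vanish, so I'd instead argue directly that $m_{I,J}$ restricted to $\dl_K(v)$ and to $\dl_K(w)$ is already null on the relevant homology (each lands in a cone $v*K_J$ or $w*K_J$, which is contractible), so $(m_{I,J})_*$ factors through $H_n(K)/\big(\mathrm{im}(i_v)_*+\mathrm{im}(i_w)_*\big) = H_n(K)/\mathrm{im}\eqref{dl}$ — wait, that is a \emph{quotient}, and triviality of $(m_{I,J})_*$ would then follow if that quotient receives $H_n(K)$ \ldots no. The clean statement: $(m_{I,J})_*$ vanishes on $\mathrm{im}(i_v)_* + \mathrm{im}(i_w)_*$ because each $\dl_K$ maps into a cone under $m_{I,J}$; so $(m_{I,J})_*$ is trivial iff it vanishes on a complement, and \emph{non-triviality} of $(m_{I,J})_*$ would then require an element of $H_n(K;A)$ outside $\mathrm{im}\eqref{dl}$ mapping nontrivially — but I want the converse implication under the non-surjectivity hypothesis. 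So in fact I expect the lemma is being used in the direction: the hypothesis guarantees $\{v,w\}$ is an edge, and then one must show $m_{I,J}$ kills \emph{all} of $H_n$; this needs that $|\dl_K(v)|\cup|\dl_K(w)|$ carries all of $H_n(K;A)$ up to the cokernel, combined with \ldots

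I expect the main obstacle to be exactly this bookkeeping: correctly identifying, up to homotopy, the map $m_{I,J}$ on $|K|$ as a quotient map collapsing $|\mathrm{star}_K(v)|\cup|\mathrm{star}_K(w)|$, and then showing via a Mayer–Vietoris / long-exact-sequence argument that, when \eqref{dl} fails to be surjective, the induced map on $H_n$ is zero — the subtlety being that $H_n(\mathrm{star}) = 0$ (stars are cones) so that $H_n(\dl_K(v)) \cong H_n(\dl_K(v)\cup\mathrm{star}_K(v)$-relative-data$)$, and one must convert "not surjective" for the sum of deletions into "zero" for the suspension map, which is where the two-vertex structure ($|K_I| = S^0$ giving an honest suspension) is essential. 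The case analysis on whether $v,w$ lie in a common simplex, handled at the start, reduces everything to the edge case, and then I would finish by observing $|\mathrm{star}_K(v)| \cup |\mathrm{star}_K(w)|$ deformation retracts onto $|\dl_K(v)|\cup|\dl_K(w)|$ rel the rest (since $\{v,w\}\in K$ lets the two stars overlap along $v*w*\dl_K(\{v,w\})$), so that $(m_{I,J})_*$ factors as $H_n(K;A)\twoheadrightarrow H_n(K;A)/\mathrm{im}\eqref{dl}$ followed by an injection into $H_n(K_I*K_J;A)$ — and triviality then is equivalent to the hypothesis, giving the "if" direction immediately and the "only if" direction by the opening reduction.
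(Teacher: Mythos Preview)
Your proposal contains two genuine errors that derail the argument. First, your reduction in the non-edge case is wrong: from $K=\dl_K(v)\cup\dl_K(w)$ you conclude that \eqref{dl} is surjective, but Mayer--Vietoris does not say that $H_n$ of a union is generated by $H_n$ of the pieces. A $4$-cycle with $v,w$ opposite vertices already gives a counterexample: each deletion is a contractible path, yet $H_1(K)\ne 0$. So the hypothesis that \eqref{dl} fails to be surjective does \emph{not} force $\{v,w\}$ to be an edge, and the non-edge case is exactly the substantive one. Second, you mis-identify $K_I$ in the edge case: if $\{v,w\}\in K$ then the full subcomplex $K_I$ is the $1$-simplex on $v,w$, so $|K_I|$ is contractible, not $S^0$, and hence $|K_I*K_J|$ is contractible and $H_n(K_I*K_J;A)=0$. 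Thus when $\{v,w\}$ is an edge the conclusion $(m_{I,J})_*=0$ is immediate, with no need for your suspension/quotient analysis (which moreover contradicts itself when you write ``$\{v,w\}\notin K_I$'' in that case).

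What the paper actually does is treat the non-edge case directly at the chain level. Assuming $\{v,w\}\notin K$, one has $|K_I|=S^0$ and $K_I*K_J=\Sigma K_J$. The paper takes an $n$-cycle $c$ representing a class not in the image of \eqref{dl}, writes $c=v*c_v+w*c_w+d$ with $c_v,c_w$ chains in the respective links and $d$ a chain in $K_J$, checks that $c_v,c_w$ are cycles in $K_J$, computes $(m_{I,J})_*[c]=[v*c_v-w*c_v]$ in $H_n(\Sigma K_J;A)$ via the suspension isomorphism, and shows that if this vanished then $c_v=\partial e$ for some $e$, which would exhibit $[c]$ as a sum of classes coming from $\dl_K(v)$ and $\dl_K(w)$, contradicting the choice of $c$. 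That chain computation is the missing idea in your proposal.
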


  \begin{proof}
    We set notation. The link of a vertex $u$ of $K$ is defined by
    $$\lk_K(u)=\{\sigma\in K\mid u\not\in\sigma\text{ and }\sigma\cup\{u\}\in K\}.$$
    For an $n$-chain $c=\sum_ia_i[j_{i,0},\ldots,j_{i,n}]$ of $\lk_K(u)$ for $a_i\in A$ and $[j_{i,0},\ldots,j_{i,n}]\in\lk_K(u)$, we abbreviate the $(n+1)$-chain $\sum_ia_i[u,j_{i,0},\ldots,j_{i,n}]$ of $K$ by $u*c$.

    Assume $\{v,w\}$ is not an edge of $K$. Let $c$ be an $n$-cycle of $K$ representing a homology class which is not in the image of the map \eqref{dl}. Then by the assumption above, there are $(n-1)$-chain $c_v$ of $\lk_K(v)$, $(n-1)$-chain $c_w$ of $\lk_K(w)$ and an $n$-chain $d$ of $K_J$ such that
    $$c=v*c_v+w*c_w+d.$$
    Since $c_v$ is a chain of $\lk_K(v)$, $c_v$ is a chain of $K_J$ by the assumption above. One also gets $c_w$ is a chain of $K_J$. Since $\partial c=0$, one has $c_v-v*\partial c_v+c_w-w*\partial c_w+\partial d=0$, implying
    $$\partial c_v=\partial c_w=c_v+c_w+\partial d=0.$$
    Then it follows that
    $$(m_{I,J})_*([c])=[v*c_v-w*c_v+\partial(w*d)]=[v*c_v-w*c_v]\in H_n(K_I*K_J;A).$$
    Since $K_I*K_J=\Sigma K_J$ by assumption, the map
    $$H_{n-1}(K_J;A)\to H_n(K_I*K_J;A),\quad x\mapsto v*x-w*x$$
    is an isomorphism. Thus since $c_v$ is a cycle of $K_J$, if $(m_{I,J})_*([c])=0$, then there is an $n$-chain $e$ of $K_J$ such that $c_v=\partial e$, implying
    $$\partial(w*c_w+d+e)=c_w+\partial d+\partial e=c_w+\partial d+c_v=0,\quad\partial(v*c_v-e)=c_v-\partial e=0.$$
    Therefore $w*c_w+d+e$ and $v*c_v-e$ are cycles of $\dl_K(v)$ and $\dl_K(w)$, respectively, such that $[c]=(i_v)_*([w*c_w+d+e])+(i_w)_*([v*c_v-e])$. This contradicts the definition of $c$, so the only if part is proved. The if part is obvious because $H_n(K_I*K_J;A)=0$.
  \end{proof}

  \begin{proposition}
    \label{1->2}
    If $\dim K\le 2$ and $K$ is Golod, then the following statements hold:
    \begin{enumerate}
      \item the 1-skeleton of $K$ is chordal;

      \item every full subcomplex of $K$ having vertex-breakable second homology is 1-neighborly.
    \end{enumerate}
  \end{proposition}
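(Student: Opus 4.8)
The plan is to prove both statements by contradiction, in each case producing a non-trivial product in $\Tor_+^{R[v_1,\ldots,v_m]}(R[K],R)$ for a suitable ring $R$. The common mechanism is this: suppose we can find a full subcomplex $L=K_I$, two vertices $v,w$ of $L$ with $\{v,w\}\notin K$, a non-trivial finitely generated abelian group $A$ and an integer $n$ such that $L$ has vertex-breakable $n$-th homology over $A$; write $J=I-\{v,w\}$, which will be non-empty in our applications. Then the defining map $\bigoplus_{u\in I}(i_u)_*$ is not surjective, hence neither is its partial sum $(i_v)_*\oplus(i_w)_*\colon H_n(\dl_L(v);A)\oplus H_n(\dl_L(w);A)\to H_n(L;A)$; since $\{v,w\}$ is not an edge of $L$, Lemma~\ref{vertex-breakable} applied to $L$ in place of $K$ shows $(m_{\{v,w\},J})_*\colon H_n(L;A)\to H_n(L_{\{v,w\}}*L_J;A)$ is non-trivial. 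By the contrapositive of Lemma~\ref{dual trivial} applied to $|m_{\{v,w\},J}|$, there is a ring $R$ for which $(m_{\{v,w\},J})^*\colon H^n(L_{\{v,w\}}*L_J;R)\to H^n(L;R)$ is non-trivial. Because $\{v,w\}$ is a non-face, $|L_{\{v,w\}}|\simeq S^0$ has free reduced homology, so the Künneth theorem for the join $L_{\{v,w\}}*L_J\simeq\Sigma L_J$ carries no $\Tor$ term and $\widetilde H^n(L_{\{v,w\}}*L_J;R)\cong\widetilde H^0(K_{\{v,w\}};R)\otimes\widetilde H^{n-1}(K_J;R)$; hence by Theorem~\ref{Hochster} the above map is precisely the product $\widetilde H^0(K_{\{v,w\}};R)\otimes\widetilde H^{n-1}(K_J;R)\to\widetilde H^n(K_I;R)$ in $\Tor_*^{R[v_1,\ldots,v_m]}(R[K],R)$. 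Since $\widetilde H^0(K_{\{v,w\}};R)$ lies in $\Tor_3$ and $\widetilde H^{n-1}(K_J;R)$ in $\Tor_{n+|J|}$, both positive, this contradicts Golodness of $K$ over $R$.

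For statement (1), assume the $1$-skeleton of $K$ is not chordal and let $C$ be a minimal cycle, of length $\ell\ge4$, with vertex set $I$. Since $C$ has no chord and $\ell\ge4$, no three vertices of $I$ span a triangle, so $K_I$ contains no $2$-simplex; therefore $K_I=C$ and $|K_I|\simeq S^1$. For every vertex $u$ of $C$ the deletion $\dl_C(u)$ is a path, so $H_1(\dl_C(u);\Z)=0$ and hence $C=K_I$ has vertex-breakable first homology over $\Z$. Taking $v,w$ to be two non-adjacent vertices of $C$ (which exist as $\ell\ge4$), so that $\{v,w\}\notin K_I=K$ and $|J|=\ell-2\ge1$, and running the mechanism with $n=1$ and $L=K_I$ gives the contradiction.

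For statement (2), let $L=K_I$ be a full subcomplex with vertex-breakable second homology over some non-trivial finitely generated abelian group $A$, and suppose $L$ is not $1$-neighborly. Non-surjectivity of the defining map forces $H_2(L;A)\ne0$, so $\dim L\ge2$, $|I|\ge3$ and $J=I-\{v,w\}\ne\emptyset$ for any pair $v,w\in I$; and since $L$ is not $1$-neighborly there are $v,w\in I$ with $\{v,w\}\notin L$, hence $\{v,w\}\notin K$ as $L$ is full. Running the mechanism with $n=2$, this $L$, $A$ and $v,w$ gives the contradiction.

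The routine bookkeeping aside, the one delicate point is the last step of the mechanism: a non-trivial map on $H^n$ of a join could a priori detect a $\Tor$ class rather than a genuine cohomology product, so that its non-triviality would not contradict Golodness. This is precisely why we arrange $\{v,w\}$ to be a non-face — then $L_{\{v,w\}}\simeq S^0$ contributes only a free reduced $H^0$, the Künneth theorem for $L_{\{v,w\}}*L_J$ carries no $\Tor$ summand, and the map produced by Lemma~\ref{dual trivial} is identified on the nose with the Baskakov--Buchstaber--Panov product of Theorem~\ref{Hochster}. It is also this non-face requirement that explains why chordality of the $1$-skeleton (induced cycles of length $\ge4$ produce non-faces) and $1$-neighborliness (its failure is a non-face) enter on an equal footing.
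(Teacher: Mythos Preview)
Your proof is correct and uses exactly the same three ingredients as the paper --- Lemma~\ref{dual trivial}, Lemma~\ref{vertex-breakable}, and Theorem~\ref{Hochster} together with the strong K\"unneth formula afforded by $K_{\{v,w\}}\simeq S^0$ --- with the chain of implications simply run as a contrapositive (the paper argues Golod $\Rightarrow m^*$ trivial $\Rightarrow m_*$ trivial $\Rightarrow$ edge; you argue non-edge $\Rightarrow m_*$ non-trivial $\Rightarrow m^*$ non-trivial $\Rightarrow$ not Golod). The one substantive difference is statement~(1): the paper defers to the proof of \cite[Proposition~8.17]{IK3}, whereas you give a self-contained argument by running the same mechanism with $n=1$ on an induced cycle $K_I=C$, which nicely unifies the two statements under a single template. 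One small slip: ``$\{v,w\}\notin K_I=K$'' should read ``$\{v,w\}\notin K_I$, hence $\{v,w\}\notin K$ since $K_I$ is full''.
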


  \begin{proof}
    The first statement is proved in the proof of \cite[Proposition 8.17]{IK3}. Suppose that $K_I$ has vertex-breakable second homology. Clearly, the second statement holds for $|I|\le 2$, and so we assume $|I|\ge 3$. Take any two vertices $v,w\in I$, and let $J=I-\{v,w\}$. Assume that $\{v,w\}$ is not an edge of $K$. Since $H^*(K_{\{v,w\}};R)$ is a free $R$-module, the strong form of the K\"unneth formula holds as
    \[
      \widetilde{H}^n(K_{\{v,w\}}*K_J;R)\cong\bigoplus_{i+j=n-1}\widetilde{H}^i(K_{\{v,w\}};R)\otimes \widetilde{H}^j(K_J;R).
    \]
    Then it follows from Lemma \ref{dual trivial} that the map $(m_{\{v,w\},J})_*\colon H_2(K_I;A)\to H_2(K_{\{v,w\}}*K_J;A)$ is trivial for any finitely generated abelian group $A$. Thus by Lemma \ref{vertex-breakable}, $\{v,w\}$ is an edge of $K_I$. This is a contradiction, and so $\{v,w\}$ must be an edge of $K$.
  \end{proof}


  \section{Proof of Theorem \ref{main}}

  We will use the following simple lemmas.

  \begin{lemma}
    \label{injective}
    In a commutative diagram of abelian groups
    \[
      \xymatrix{
        0\ar[r]&A_1\ar[r]\ar[d]^{f_1}&A_2\ar[r]\ar[d]^{f_2}&A_3\ar[r]\ar[d]^{f_3}&0\\
        0\ar[r]&B_1\ar[r]&B_2\ar[r]&B_3\ar[r]&0
      }
    \]
    where rows are exact, suppose $f_3$ is injective. Then $f_1$ is injective if and only if so is $f_2$.
  \end{lemma}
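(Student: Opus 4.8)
This is a standard diagram chase, so the plan is short. First I would prove injectivity of $f_1$ assuming $f_2$ is injective: since $A_1 \to A_2$ is injective by exactness of the top row and $f_2$ is injective, the composite $A_1 \to A_2 \to B_2$ is injective; by commutativity this composite equals $A_1 \xrightarrow{f_1} B_1 \to B_2$, so $f_1$ must be injective. This direction does not even use $f_3$.

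For the converse, suppose $f_1$ is injective and let $a \in \ker f_2$. Pushing $a$ forward to $A_3$ and using commutativity of the right-hand square, its image in $A_3$ lands in $\ker f_3$, which is trivial by hypothesis; hence $a$ maps to $0$ in $A_3$, so by exactness of the top row $a$ comes from some $a_1 \in A_1$ via the injection $A_1 \to A_2$. Then $f_1(a_1)$ maps to $f_2(a) = 0$ in $B_2$, and since $B_1 \to B_2$ is injective (exactness of the bottom row), $f_1(a_1) = 0$; injectivity of $f_1$ forces $a_1 = 0$, hence $a = 0$. Thus $f_2$ is injective.

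There is no real obstacle here — the only thing to be careful about is invoking exactness in the right places (injectivity of the two left-hand maps, and the kernel–image identification at $A_2$). One could also note that this is an immediate consequence of the five lemma, but the direct chase is cleaner to state.

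\begin{proof}
    Suppose $f_2$ is injective. The map $A_1\to A_2$ is injective by exactness of the top row, so the composite $A_1\to A_2\xrightarrow{f_2}B_2$ is injective. By commutativity this composite equals $A_1\xrightarrow{f_1}B_1\to B_2$, hence $f_1$ is injective.

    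Conversely, suppose $f_1$ is injective, and let $a\in\ker f_2$. By commutativity of the right-hand square, the image of $a$ in $A_3$ lies in $\ker f_3=0$, so $a$ maps to $0$ in $A_3$. By exactness of the top row, $a$ is the image of some $a_1\in A_1$ under the injection $A_1\to A_2$. Then $f_1(a_1)$ maps to $f_2(a)=0$ in $B_2$, and since $B_1\to B_2$ is injective, $f_1(a_1)=0$. As $f_1$ is injective, $a_1=0$, hence $a=0$. Therefore $f_2$ is injective.
\end{proof}
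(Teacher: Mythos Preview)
Your proof is correct. The paper takes a slightly different route: it invokes the snake lemma to obtain the exact sequence
\[
0\to\ker f_1\to\ker f_2\to\ker f_3,
\]
and since $\ker f_3=0$ by hypothesis, the map $\ker f_1\to\ker f_2$ is an isomorphism, giving the equivalence in one stroke. Your approach is the elementary diagram chase that underlies (part of) the snake lemma itself, so it is more self-contained and avoids quoting an external result; the paper's version is terser but assumes the reader is happy to invoke the snake lemma as a black box. Either is entirely adequate for a lemma of this nature.
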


  \begin{proof}
    By the snake lemma, there is an exact sequence
    \[
      0\to\mathrm{Ker}\,f_1\to\mathrm{Ker}\,f_2\to\to\mathrm{Ker}\,f_3.
    \]
    Since $f_3$ is injective, $f_1$ is injective if and only if so is $f_2$, completing the proof.
  \end{proof}

  \begin{lemma}
    \label{split}
    For an exact sequence $0\to A_1\to A_2\to A_3\to 0$ of abelian groups, the following statements are equivalent:
    \begin{enumerate}
      \item the map $\mathrm{Ext}(A_3,A)\to\mathrm{Ext}(A_2,A)$ is injective for any abelian group $A$;

      \item the exact sequence $0\to A_1\to A_2\to A_3\to 0$ splits.
    \end{enumerate}
  \end{lemma}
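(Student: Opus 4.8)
The plan is to prove the two implications separately, with (2)$\Rightarrow$(1) being the routine direction and (1)$\Rightarrow$(2) the one requiring care. For (2)$\Rightarrow$(1), suppose the sequence $0\to A_1\to A_2\to A_3\to 0$ splits, so $A_2\cong A_1\oplus A_3$ and the inclusion $A_1\hookrightarrow A_2$ admits a retraction. Applying the functor $\mathrm{Ext}(-,A)$ to the split surjection $A_2\to A_3$ produces a split injection $\mathrm{Ext}(A_3,A)\to\mathrm{Ext}(A_2,A)$ (a split exact sequence is carried to a split exact sequence by any additive functor), so this map is injective for every $A$.

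For (1)$\Rightarrow$(2), I would exploit the universal property of $\mathrm{Ext}$. Recall that $\mathrm{Ext}(A_3,A_1)$ classifies extensions of $A_3$ by $A_1$, and that the given extension $0\to A_1\to A_2\to A_3\to 0$ corresponds to a distinguished class $\xi\in\mathrm{Ext}(A_3,A_1)$; the extension splits if and only if $\xi=0$. The long exact sequence in $\mathrm{Ext}(-,A_1)$ associated to $0\to A_1\to A_2\to A_3\to 0$ reads
\[
\mathrm{Hom}(A_2,A_1)\to\mathrm{Hom}(A_1,A_1)\xrightarrow{\ \delta\ }\mathrm{Ext}(A_3,A_1)\to\mathrm{Ext}(A_2,A_1),
\]
and a standard identification shows $\delta(\mathrm{id}_{A_1})=\xi$ (up to sign). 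Now take $A=A_1$ in hypothesis (1): the map $\mathrm{Ext}(A_3,A_1)\to\mathrm{Ext}(A_2,A_1)$ is injective, so $\delta=0$, hence $\xi=\delta(\mathrm{id}_{A_1})=0$, and the extension splits.

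The main obstacle is making the identification $\delta(\mathrm{id}_{A_1})=\xi$ precise and correctly oriented: one must recall the construction of the connecting homomorphism $\delta$ in $\mathrm{Ext}^*(-,A_1)$ via a free resolution $0\to R\to F\to A_3\to 0$ of $A_3$, pull the extension back along $F\to A_3$ to obtain a splitting over $F$, and read off that the obstruction class of $0\to A_1\to A_2\to A_3\to 0$ is exactly the image under $\delta$ of $\mathrm{id}_{A_1}\in\mathrm{Hom}(A_1,A_1)$. Alternatively, and perhaps more cleanly, one can avoid the Yoneda description entirely: apply (1) with $A=A_1$, and separately observe that the map $\mathrm{Hom}(A_2,A_1)\to\mathrm{Hom}(A_1,A_1)$ in the above sequence is surjective precisely when a retraction $A_2\to A_1$ exists — and injectivity of the next map forces this surjectivity by exactness. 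I expect the latter route to be the shortest to write down, so I would present that one as the main argument and relegate the Yoneda-class interpretation to a remark.
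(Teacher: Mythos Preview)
Your proposal is correct and takes essentially the same approach as the paper: both use the long exact sequence in $\mathrm{Ext}(-,A)$, note that injectivity of $\mathrm{Ext}(A_3,A)\to\mathrm{Ext}(A_2,A)$ is equivalent (by exactness) to surjectivity of $\mathrm{Hom}(A_2,A)\to\mathrm{Hom}(A_1,A)$, and then set $A=A_1$ to produce a retraction. Your ``alternative, cleaner'' route is precisely the paper's argument; the Yoneda description is unnecessary.
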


  \begin{proof}
    Consider the exact sequence
    \begin{multline*}
      0\to\mathrm{Hom}(A_3,A)\to\mathrm{Hom}(A_2,A)\to\mathrm{Hom}(A_1,A)\\
      \to\mathrm{Ext}(A_3,A)\to\mathrm{Ext}(A_2,A)\to\mathrm{Ext}(A_1,A)\to 0.
    \end{multline*}
    Then the first statement is equivalent to that the map $\mathrm{Hom}(A_2,A)\to\mathrm{Hom}(A_1,A)$ is surjective for any abelian group $A$. By setting $A=A_1$, this turns out to be equivalent to the second statement.
  \end{proof}

  \begin{lemma}
    \label{Tor surjection}
    Let $f\colon A\to B$ be a surjection between finitely generated abelian groups such that $f_*\colon\mathrm{Tor}(A,\Z/p^r)\to\mathrm{Tor}(B,\Z/p^r)$ is surjective for any prime $p$ and any positive integer $r$. Then $f$ admits a section.
  \end{lemma}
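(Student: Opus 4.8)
The plan is to rephrase the hypothesis in terms of torsion subgroups and then to build a section of $f$ one summand at a time over a direct-sum decomposition of $B$. First I would record the elementary identification: for any abelian group $G$ and any $n\ge 1$, the free resolution $0\to\Z\xrightarrow{n}\Z\to\Z/n\to0$ of $\Z/n$ gives a natural isomorphism $\Tor(G,\Z/n)\cong G[n]$, where $G[n]=\{g\in G\mid ng=0\}$, under which a homomorphism $G\to G'$ induces the restriction to $n$-torsion. Combining this with the primary decomposition $\Z/n\cong\bigoplus_p\Z/p^{v_p(n)}$ and the additivity of $\Tor$ in its second variable, the hypothesis that $f_*\colon\Tor(A,\Z/p^r)\to\Tor(B,\Z/p^r)$ is surjective for all prime powers upgrades to:
\[
  f\colon A[n]\to B[n]\text{ is surjective for every }n\ge 1.
\]

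Next, since $B$ is finitely generated, I would fix a decomposition $B\cong\Z^k\oplus\bigoplus_{i=1}^\ell\Z/n_i$, with $e_1,\dots,e_k$ the standard basis of the free part and $d_1,\dots,d_\ell$ generators of the cyclic torsion summands, so that $d_i$ has order $n_i$. Because $\mathrm{Hom}(\Z,A)\cong A$ and $\mathrm{Hom}(\Z/n_i,A)\cong A[n_i]$, a homomorphism $s\colon B\to A$ is exactly an arbitrary choice of elements $s(e_j)\in A$ together with elements $s(d_i)\in A[n_i]$; and such an $s$ is a section of $f$ precisely when $f(s(e_j))=e_j$ and $f(s(d_i))=d_i$, a condition that need only be checked on these generators. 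The elements $s(e_j)$ exist because $f$ is surjective, and the elements $s(d_i)$ exist because $d_i\in B[n_i]$ and $f\colon A[n_i]\to B[n_i]$ is surjective by the previous step. Assembling these choices produces the required section.

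I do not expect a genuine obstacle here: the whole content lies in the observation $\Tor(G,\Z/n)\cong G[n]$, which reveals that the hypothesis is precisely the lifting property ``every cyclic subgroup of $B$ lifts through $f$'', after which a section is assembled summand by summand. The only step demanding a line of care is the passage from prime-power moduli to arbitrary $n$, which is immediate from the primary decomposition of $\Z/n$ and the additivity of $\Tor$.
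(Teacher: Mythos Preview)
Your proof is correct and follows essentially the same approach as the paper: both arguments identify $\Tor(G,\Z/p^r)$ with the $p^r$-torsion subgroup $G[p^r]$, decompose $B$ as a direct sum of cyclic groups, and build the section by lifting the generator of each cyclic summand to an element of $A$ of the appropriate order. Your extra step of passing from prime-power moduli to arbitrary $n$ is harmless but unnecessary---you could simply take the torsion summands of $B$ to be of prime-power order, as the paper does.
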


  \begin{proof}
    Since $B$ is finitely generated, there is a decomposition
    \[
      B\cong\mathrm{Free}(B)\oplus\bigoplus_p\mathrm{Tor}_p(B)
    \]
    where $\mathrm{Free}(B)$ is a free abelian group, $\mathrm{Tor}_p(B)$ is the $p$-torsion part of $B$ and $p$ ranges over all primes. Since $f$ is surjective, there is a map $s\colon\mathrm{Free}(B)\to A$ such that $f\circ s=1_{\mathrm{Free}(B)}$. Let $g_i$ be a generator of $\Z/p^{r_i}$ in $\mathrm{Tor}_p(B)\cong\Z/p^{r_1}\oplus\cdots\oplus\Z/p^{r_n}$. Then since
    \begin{equation}
      \label{Tor}
      \mathrm{Tor}(B,\Z/p^r)=\{x\in B\mid p^rx=0\}
    \end{equation}
    and $f_*\colon\mathrm{Tor}(A,\Z/p^{r_i})\to\mathrm{Tor}(B,\Z/p^{r_i})$ is surjective, there is an element $h_i$ of $A$ such that $h_i$ is of order $r_i$ and $f(h_i)=g_i$. Thus there is a map $s_p\colon\mathrm{Tor}_p(B)\to A$ such that $f\circ s_p=1_{\mathrm{Tor}_p(B)}$. Therefore we obtain a section $s\oplus\bigoplus_ps_p\colon B\to A$ of $f$.
  \end{proof}

  \begin{lemma}
    \label{dual}
    Let $f\colon X\to Y$ be a map between finite complexes. If $f_*\colon H_*(X;A)\to H_*(Y;A)$ is surjective for any finitely generated abelian group $A$ and $*=n-1,n$. Then $f^*\colon H^n(Y;A)\to H^n(X;A)$ is injective for any finitely generated abelian group $A$.
  \end{lemma}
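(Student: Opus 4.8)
The plan is to run the universal coefficient theorem simultaneously for $X$ and $Y$ and then feed the outcome into Lemmas \ref{injective}, \ref{split} and \ref{Tor surjection}. Since $X$ and $Y$ are finite complexes, all of their homology groups are finitely generated, so these lemmas apply. The universal coefficient theorem for cohomology, being natural, yields a commutative diagram with exact rows
\[
\xymatrix{
0\ar[r]&\mathrm{Ext}(H_{n-1}(Y;\Z),A)\ar[r]\ar[d]&H^n(Y;A)\ar[r]\ar[d]^{f^*}&\mathrm{Hom}(H_n(Y;\Z),A)\ar[r]\ar[d]&0\\
0\ar[r]&\mathrm{Ext}(H_{n-1}(X;\Z),A)\ar[r]&H^n(X;A)\ar[r]&\mathrm{Hom}(H_n(X;\Z),A)\ar[r]&0
}
\]
in which the two outer vertical maps are induced by $f_*$ on integral homology. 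By Lemma \ref{injective} it suffices to show that both outer vertical maps are injective for every finitely generated abelian group $A$.

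The rightmost vertical map is injective because $f_*\colon H_n(X;\Z)\to H_n(Y;\Z)$ is surjective (the hypothesis with $A=\Z$) and $\mathrm{Hom}(-,A)$ is left exact. For the leftmost vertical map, write $A_3=H_{n-1}(Y;\Z)$ and $A_2=H_{n-1}(X;\Z)$, and let $A_1$ be the kernel of the surjection $f_*\colon A_2\to A_3$. Applying Lemma \ref{split} to the exact sequence $0\to A_1\to A_2\to A_3\to 0$, the map $\mathrm{Ext}(A_3,A)\to\mathrm{Ext}(A_2,A)$ is injective for every $A$ provided this sequence splits, and by Lemma \ref{Tor surjection} it splits provided $f_*\colon\mathrm{Tor}(A_2,\Z/p^r)\to\mathrm{Tor}(A_3,\Z/p^r)$ is surjective for all primes $p$ and all $r\ge 1$.

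It remains to establish this last surjectivity, and here I would invoke the degree-$n$ hypothesis with coefficients in $\Z/p^r$. The natural short exact sequence coming from the universal coefficient theorem for homology, namely $0\to H_n(-;\Z)\otimes\Z/p^r\to H_n(-;\Z/p^r)\to\mathrm{Tor}(H_{n-1}(-;\Z),\Z/p^r)\to 0$, provides a commutative square relating the surjections $H_n(X;\Z/p^r)\to\mathrm{Tor}(A_2,\Z/p^r)$ and $H_n(Y;\Z/p^r)\to\mathrm{Tor}(A_3,\Z/p^r)$ via $f_*$. Since $f_*\colon H_n(X;\Z/p^r)\to H_n(Y;\Z/p^r)$ is surjective by hypothesis, the composite $H_n(X;\Z/p^r)\to\mathrm{Tor}(A_3,\Z/p^r)$ is surjective, whence $f_*\colon\mathrm{Tor}(A_2,\Z/p^r)\to\mathrm{Tor}(A_3,\Z/p^r)$ is surjective. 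Chasing these facts back through the two diagrams then gives the injectivity of $f^*$. I do not expect a genuine obstacle here: the argument is a formal assembly of the three auxiliary lemmas via the universal coefficient theorem, and the only point needing care is the bookkeeping of coefficient groups — namely that $\Z$ and each $\Z/p^r$ are finitely generated, so every instance of the hypothesis invoked is legitimate.
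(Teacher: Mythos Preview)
Your proof is correct and follows essentially the same approach as the paper's: both use the universal coefficient theorem for cohomology together with Lemma~\ref{injective} to reduce to injectivity of the two outer vertical maps, then invoke Lemmas~\ref{split} and~\ref{Tor surjection} and the universal coefficient theorem for homology to handle the $\mathrm{Ext}$ side. Your writeup is slightly more explicit about the final diagram chase, but the argument is otherwise identical.
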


  \begin{proof}
    By the universal coefficient theorem, there is a commutative diagram
    \[
      \xymatrix{
        0\ar[r]&\mathrm{Ext}(H_{n-1}(Y;\Z),A)\ar[r]\ar[d]^{(f_*)^*}&H^n(Y;A)\ar[r]\ar[d]^{f^*}&\mathrm{Hom}(H_n(Y;\Z),A)\ar[r]\ar[d]^{(f_*)^*}&0\\
        0\ar[r]&\mathrm{Ext}(H_{n-1}(X;\Z),A)\ar[r]&H^n(X;A)\ar[r]&\mathrm{Hom}(H_n(X;\Z),A)\ar[r]&0
      }
    \]
    where rows are exact. By Lemma \ref{injective}, it is sufficient to show that both left and right $(f_*)^*$ are injective. Since $f_*\colon H_n(X;\Z)\to H_n(Y;\Z)$ is surjective, the right $(f_*)^*$ is injective. By Lemma \ref{split}, the left $(f_*)^*$ is injective if and only if the map $f_*\colon H_{n-1}(X;\Z)\to H_{n-1}(Y;\Z)$ has a section. By the universal coefficient theorem, there is also a commutative diagram
    \[
      \xymatrix{
        0\ar[r]&H_n(X;\Z)\otimes A\ar[r]\ar[d]^{f_*\otimes 1}&H_n(X;A)\ar[r]\ar[d]^{f_*}&\mathrm{Tor}(H_{n-1}(X;\Z),A)\ar[r]\ar[d]^{f_*}&0\\
        0\ar[r]&H_n(X;\Z)\otimes A\ar[r]&H_n(Y;A)\ar[r]&\mathrm{Tor}(H_{n-1}(Y;\Z),A)\ar[r]&0
      }
    \]
    where rows are exact. Since the middle $f_*$ is surjective, so is the right $f_*$. Note that \eqref{Tor} holds for any abelian group $B$. Then since $H_{n-1}(X;\Z)$ and $H_{n-1}(Y;\Z)$ are finitely generated abelian groups, it follows from Lemma \ref{Tor surjection} that the surjectivity of the right $f_*$ implies the existence of a section of the map $f_*\colon H_{n-1}(X;\Z)\to H_{n-1}(Y;\Z)$.
  \end{proof}

  We apply Lemma \ref{dual} to our case.

  \begin{proposition}
    \label{dual special}
    If the 1-skeleton of $K$ is chordal and the map
    \begin{equation}
      \label{homology}
      \bigoplus_{v\in[m]}(i_v)_*\colon \bigoplus_{v\in[m]}H_2(\dl_K(v);A)\to H_2(K;A)
    \end{equation}
    is surjective for any finitely generated abelian group $A$, then the map
    \begin{equation}
      \label{cohomology}
      \bigoplus_{v\in[m]}i_v^*\colon H^2(K;A)\to\bigoplus_{v\in[m]}H^2(\dl_K(v);A)
    \end{equation}
    is injective.
  \end{proposition}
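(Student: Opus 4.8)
The plan is to set up the map \eqref{cohomology} so that Lemma \ref{injective} together with the universal coefficient theorem can be applied, much as in the proof of Lemma \ref{dual} — the point being that Lemma \ref{dual} itself is not directly available here, since it would require the degree-$1$ analogue of \eqref{homology} to be surjective, which can fail even under the present hypotheses (for $K=\partial\Delta^2$ the hypotheses hold, yet $\bigoplus_v H_1(\dl_K(v);A)\to H_1(K;A)$ is the zero map to $A$).

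Concretely, I would let $X=\bigsqcup_{v\in[m]}\dl_K(v)$ be the disjoint union of the vertex deletions and $f\colon X\to K$ the map whose restriction to the $v$-th summand is $i_v$; since $[m]$ is finite, $f^*$ is the map \eqref{cohomology} and $f_*$ (on $H_1$ and $H_2$) is the map \eqref{homology}. The universal coefficient theorem gives a commutative diagram with exact rows comparing the UCT sequences of $H^2(K;A)$ and $H^2(X;A)$, whose outer vertical maps are $\mathrm{Ext}(f_*,A)\colon\mathrm{Ext}(H_1(K;\Z),A)\to\bigoplus_v\mathrm{Ext}(H_1(\dl_K(v);\Z),A)$ and $\mathrm{Hom}(f_*,A)\colon\mathrm{Hom}(H_2(K;\Z),A)\to\bigoplus_v\mathrm{Hom}(H_2(\dl_K(v);\Z),A)$. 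By Lemma \ref{injective} it then suffices to show both of these are injective for every finitely generated abelian group $A$. The $\mathrm{Hom}$-statement is immediate: the case $A=\Z$ of \eqref{homology} makes $\bigoplus_v H_2(\dl_K(v);\Z)\to H_2(K;\Z)$ surjective, and $\mathrm{Hom}(-,A)$ sends a surjection to an injection.

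For the $\mathrm{Ext}$-statement, write $T=\mathrm{Tor}(H_1(K;\Z))$ and $\Phi=f_*\colon\bigoplus_v H_1(\dl_K(v);\Z)\to H_1(K;\Z)$. The crucial observation is that the hypothesis with $\Z/p^r$-coefficients controls the torsion of $H_1$: for each prime $p$ and each $r\ge1$ the induced map $\bigoplus_v\mathrm{Tor}(H_1(\dl_K(v);\Z),\Z/p^r)\to\mathrm{Tor}(H_1(K;\Z),\Z/p^r)$ is surjective. This I would get from the case $A=\Z/p^r$ of \eqref{homology} by chasing the natural short exact sequences $0\to H_2(-;\Z)\otimes\Z/p^r\to H_2(-;\Z/p^r)\to\mathrm{Tor}(H_1(-;\Z),\Z/p^r)\to0$ for the deletions and for $K$: surjectivity of the middle vertical map forces surjectivity of the right one. (This is the only step where chordality of the $1$-skeleton could enter — one could instead pick a simplicial vertex and argue by Mayer–Vietoris — but it turns out not to be needed.) Since $\mathrm{Tor}(C,\Z/p^r)=\{c\mid p^rc=0\}$ as in \eqref{Tor} and $H_1(K;\Z)$ is finitely generated, letting $r$ grow shows in particular that $T\subseteq\mathrm{im}\,\Phi$.

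To finish I would restrict $\Phi$ to $B=\Phi^{-1}(T)$, obtaining a surjection $\Phi|_B\colon B\to T$ of finitely generated abelian groups; as every torsion element of $\bigoplus_v H_1(\dl_K(v);\Z)$ already lies in $B$, the previous paragraph says $\Phi|_B$ is surjective on $p^r$-torsion for all $p,r$, so Lemma \ref{Tor surjection} produces a section $s\colon T\to B$. Composing with $B\hookrightarrow\bigoplus_v H_1(\dl_K(v);\Z)$ gives a homomorphism $\widehat s$ with $\Phi\circ\widehat s$ equal to the inclusion $\iota\colon T\hookrightarrow H_1(K;\Z)$. Since $H_1(K;\Z)/T$ is free, $\mathrm{Ext}(\iota,A)$ is an isomorphism; as it factors through $\mathrm{Ext}(\Phi,A)=\mathrm{Ext}(f_*,A)$, the latter is injective. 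Together with the $\mathrm{Hom}$-statement this yields, via Lemma \ref{injective}, that $f^*=\bigoplus_v i_v^*$ is injective. I expect the diagram chase in the previous paragraph — turning the $\Z/p^r$-coefficient form of \eqref{homology} into surjectivity onto $\mathrm{Tor}(H_1(K;\Z),\Z/p^r)$, with the naturality of the universal coefficient sequence and the exhaustion of the $p$-primary part of $\mathrm{Tor}(H_1(K;\Z))$ as $r$ grows all correctly aligned — to be the only delicate point.
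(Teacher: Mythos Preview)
Your argument is correct, and in fact it proves more than the paper does: you never use the chordality hypothesis, so your proof shows that the surjectivity of \eqref{homology} for all finitely generated $A$ already forces injectivity of \eqref{cohomology}. The paper takes a different route: it observes that, once the exceptional case $K=\partial\Delta^2$ is set aside by hand, chordality of the $1$-skeleton makes the degree-$1$ analogue of \eqref{homology} surjective (for $m\ge 4$ every $3$-cycle misses some vertex), and then Lemma~\ref{dual} applies directly. Your approach avoids both the case split and the appeal to Lemma~\ref{dual} by noting that only the torsion of $H_1(K;\Z)$ contributes to the $\mathrm{Ext}$ term, and that this torsion is already hit---with a section, via Lemma~\ref{Tor surjection} applied to $\Phi|_{\Phi^{-1}(T)}$---thanks to the $\Z/p^r$-coefficient instances of \eqref{homology}. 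The trade-off is that the paper's proof is a two-line reduction once chordality is invoked, whereas yours is a slightly longer but self-contained and hypothesis-minimal argument; it also makes transparent that chordality in Proposition~\ref{dual special} is superfluous, which the paper's proof obscures.
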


  \begin{proof}
    If $K$ is the boundary of a 2-simplex, then $H_2(K;A)=0$ and $H^2(K;A)=0$ for any abelian group $A$. Thus the statement holds obviously. Assume that $K$ is not the boundary of a 2-simplex. By Lemma \ref{dual}, it is sufficient to show that the map
    \[
      \bigoplus_{v\in[m]}(i_v)_*\colon \bigoplus_{v\in[m]}H_1(\dl_K(v);A)\to H_1(K;A)
    \]
    is surjective for any finitely generated abelian group $A$. For $m\le 3$, $H_1(K;A)=0$ since $K$ is not the boundary of a 2-simplex, where $m$ is the number of vertices of $K$. Then the claim holds. For $m\ge 4$, every minimal cycle of $K$ is in $\dl_K(v)$ for some $v$ since the 1-skeleton of $K$ is chordal. Then the claim also holds.
  \end{proof}

  We consider triviality of the fat-wedge filtration of $\R Z_K$.

  \begin{proposition}
    \label{2->3}
    Suppose $\dim K\le 2$ and $\varphi_{K_I}$ is null-homotopic for all $I\subset[m]$ with $I\ne\emptyset,[m]$. If the 1-skeleton of $K$ is chordal and the map \eqref{homology} is surjective for any finitely generated abelian group $A$, then $\varphi_K$ is null homotopic.
  \end{proposition}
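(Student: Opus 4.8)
The plan is to reduce the triviality of $\varphi_K$ to the triviality of its restrictions to the full subcomplexes $\dl_K(v)$, where the hypothesis on the $\varphi_{K_I}$ is available, and then to feed this into Proposition \ref{dual special}. If $\dim K\le 1$ the assertion is the one-dimensional case, so assume $\dim K=2$; we may also assume $|K|$ is connected, otherwise arguing on each component. First I would show that $\varphi_K$ is null-homotopic on the $1$-skeleton and on each $|\dl_K(v)|$. Since $K^{(1)}$ is a subcomplex of $K$ on the vertex set $[m]$ and is a chordal graph, the one-dimensional case of \cite{IK1} gives that the fat-wedge filtration of $\R Z_{K^{(1)}}$ is trivial, so $\varphi_{K^{(1)}}$ is null-homotopic, and by Lemma \ref{naturality} the restriction $\varphi_K|_{|K^{(1)}|}$ factors through $\varphi_{K^{(1)}}$ and is null-homotopic. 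For a vertex $v$, the complex $\dl_K(v)=K_{[m]-v}$ is a proper non-empty full subcomplex of $K$, so $\varphi_{\dl_K(v)}$ is null-homotopic by hypothesis; by the naturality of $\varphi$ with respect to this full subcomplex inclusion (cf. Lemma \ref{naturality} and the construction in \cite[Section 5]{IK3}), $\varphi_K|_{|\dl_K(v)|}$ factors through $\varphi_{\dl_K(v)}$ and is null-homotopic as well.

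Next I would run obstruction theory. Since $\varphi_K$ is null-homotopic on $|K^{(1)}|$, it is trivial on $\pi_1$, hence lifts to the universal cover $Y$ of the path component of its image, and $\varphi_K$ is null-homotopic if and only if the lift $\widetilde\varphi_K\colon|K|\to Y$ is. As $Y$ is simply connected and $\dim|K|=2$, we have $[|K|,Y]\cong H^2(|K|;\pi_2 Y)$, so the homotopy class of $\widetilde\varphi_K$ is a single natural obstruction $o\in H^2(K;\pi_2 Y)$, with $\widetilde\varphi_K$ null-homotopic iff $o=0$. By naturality the image of $o$ under $i_v^*\colon H^2(K;\pi_2 Y)\to H^2(\dl_K(v);\pi_2 Y)$ is the obstruction for $\widetilde\varphi_K|_{|\dl_K(v)|}$, a lift of $\varphi_K|_{|\dl_K(v)|}$, hence zero by the first paragraph. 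Thus $o$ lies in the kernel of $\bigoplus_{v\in[m]}i_v^*$.

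It remains to see that this kernel vanishes. The two hypotheses are exactly those of Proposition \ref{dual special}, which therefore shows $\bigoplus_v i_v^*$ is injective on $H^2(-;A)$ for every finitely generated abelian group $A$. As $\pi_2 Y\cong H_2(Y)$ need not be finitely generated, I would write it as the filtered union of its finitely generated subgroups and use that the cohomology of a finite complex commutes with filtered colimits of coefficients: $o$ is the image of a class over some finitely generated subgroup, the finitely many classes $i_v^*o$ already vanish over some larger finitely generated subgroup $A'$, and the injectivity of $\bigoplus_v i_v^*$ over $A'$ forces $o=0$. Hence $\widetilde\varphi_K$, and so $\varphi_K$, is null-homotopic.

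I expect the main obstacle to be the second half of the first step — identifying $\varphi_K|_{|\dl_K(v)|}$ with the inclusion of $\varphi_{\dl_K(v)}$ — because Lemma \ref{naturality} as stated covers only subcomplexes on the full vertex set and must be upgraded to inclusions of full subcomplexes. Once all the restrictions $\varphi_K|_{|\dl_K(v)|}$ are known to be trivial, the remaining argument is essentially formal: the point is just to recognize that Proposition \ref{dual special}, used with a cohomology coefficient group, is precisely the mechanism promoting triviality on every $\dl_K(v)$ to triviality on $K$, while the failure of simple-connectivity of $\R Z_K^{m-1}$ and the possible infinite generation of $\pi_2 Y$ are routine technicalities.
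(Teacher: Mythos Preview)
Your approach is essentially the paper's: lift $\varphi_K$ through the universal cover, reduce to a single obstruction in $H^2(K;\pi_2)$, kill it on each $\dl_K(v)$ via the inductive hypothesis, and finish with Proposition \ref{dual special}. The three places where your details diverge are all minor. For $\pi_1$-triviality the paper does not go through the one-dimensional result but uses Lemma \ref{factorization}: chordality makes $|\widehat K|$ simply connected, so $\varphi_K$ kills $\pi_1$ directly. For your flagged obstacle, the paper does not upgrade Lemma \ref{naturality}; instead it sets $L=\dl_K(v)\sqcup\{v\}$, which has the full vertex set $[m]$ and satisfies $\varphi_L=\varphi_{\dl_K(v)}\sqcup *$, so Lemma \ref{naturality} applies verbatim and your worry evaporates. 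For the coefficient group, the paper observes (using Theorem \ref{cone decomposition} and the inductive hypothesis) that $\R Z_K^{m-1}$ is a suspension, so $\pi_2(\R Z_K^{m-1})$ is a direct sum of copies of a single finitely generated abelian group; Proposition \ref{dual special} then applies summand by summand, which replaces your filtered-colimit argument (also correct) with a one-line remark.
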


  \begin{proof}
    By definition, $\R Z_K^{m-1}$ is path-connected, and then it has the universal cover $U_K$. Since $K$ is chordal, $|\widehat{K}|$ is simply-connected as in \cite[Proposition 8.17]{IK3}. Then by Lemma \ref{factorization}, $(\varphi_K)_*\colon\pi_1(|K|)\to\pi_1(\R Z_K^{m-1})$ is trivial for any basepoint of $|K|$. In particular, we get a lift $\widetilde{\varphi}_K\colon|K|\to U_K$. Choosing any vertex $v$ of $K$, let $L=\dl_K(v)\sqcup v$. Then by arguing verbatim as above, we get a lift $\widetilde{\varphi}_L\colon|L|\to U_L$ of $\varphi_L\colon|L|\to\R Z_L^{m-1}$. Since there is a commutative diagram
    \[
      \xymatrix{
        |L|\ar[r]^{\varphi_L}\ar[d]&\R Z_L^{m-1}\ar[d]\\
        |K|\ar[r]^{\varphi_K}&\R Z_K^{m-1},
        }
    \]
    it follows from the uniqueness of lifts of $\varphi_L$ and $\varphi_K$ that the square diagram
    \[
      \xymatrix{
        |L|\ar[r]^{\widetilde{\varphi}_L}\ar[d]&U_L\ar[d]\\
        |K|\ar[r]^{\widetilde{\varphi}_K}&U_K
      }
    \]
    commutes. Let $A=\pi_2(\R Z_K^{m-1})$ and $B=\pi_2(\R Z_L^{m-1})$. Then we get a homotopy commutative diagram
    \[
      \xymatrix{
        |L|\ar[r]^{\widetilde{\varphi}_L}\ar[d]&U_L\ar[r]^{u_L}\ar[d]&K(B,2)\ar[d]\\
        |K|\ar[r]^{\widetilde{\varphi}_K}&U_K\ar[r]^{u_K}&K(A,2)
      }
    \]
    where $u_K$ and $u_L$ are isomorphisms in $\pi_2$. Note that $\varphi_L=\varphi_{\dl_K(v)}\sqcup *$. By assumption, $\varphi_{\dl_K(v)}\simeq*$. Then since $\R Z_L^{m-1}$ is path-connected, $\varphi_L$ is null-homotopic, implying $\widetilde{\varphi}_L$ is null-homotopic too. Thus the cohomology class $u_K\circ\widetilde{\varphi}_K$ belongs to the kernel of the map $H^2(K;A)\to H^2(L;A)$. Since this map is identified with $i_v^*\colon H^2(K;A)\to H^2(\dl_K(v);A)$, the cohomology class $u_K\circ\widetilde{\varphi}_K$ belongs to the kernel of the map \eqref{cohomology}.

    By Theorem \ref{cone decomposition}, $\R Z_K^{m-1}$ is a suspension. Then $A\cong H_2(\R Z_K;\Z)\otimes\Z\pi_1(\R Z_K^{m-1})$, and in particular, $A$ is a sum of copies of $H_2(\R Z_K;\Z)$ which is a finitely generated abelian group. Thus by Proposition \ref{dual special}, the map $i_v^*\colon H^2(K;A)\to\bigoplus_{v\in[m]} H^2(\dl_K(v);A)$ is injective, implying $u_K\circ\widetilde{\varphi}_K$ is null-homotopic. Since $\dim K\le 2$, we obtain $\varphi_K$ is null-homotopic.
  \end{proof}

  Now we prove Theorem \ref{main}.

  \begin{proof}[Proof of Theorem \ref{main}]
    The implication (1) $\Rightarrow$ (2) is proved by Proposition \ref{1->2}, and the implication (2) $\Rightarrow$ (3) is proved by Theorem \ref{neighborly} and Proposition \ref{2->3}. The implication (3) $\Rightarrow$ (1) is proved by Theorems \ref{Z_K Tor} and \ref{decomposition}.
  \end{proof}


  \section{Golodness over fields and rings}

  As mentioned in Section 1, it is natural to ask whether or not there is a difference between Golodness over fields and over rings. This section gives an answer by giving a simplicial complex which is Golod over any field but is not Golod over some ring.

  First, we prove that the converse of Proposition \ref{1->2} holds over a field.

  \begin{proposition}
    \label{Golod field}
    Let $\Bbbk$ be a field. A two-dimensional simplicial complex $K$ is Golod over $\Bbbk$ if and only if the following conditions hold:
    \begin{enumerate}
      \item the 1-skeleton of $K$ is chordal;

      \item every full subcomplex of $K$ having vertex-breakable second homology over $\Bbbk$ is 1-neighborly.
    \end{enumerate}
  \end{proposition}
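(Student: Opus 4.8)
The plan is to prove the two implications separately, in each case adapting the integral arguments of Sections 3 and 4 to the field $\Bbbk$, where the homological algebra collapses because $H^*(-;\Bbbk)$ is the $\Bbbk$-dual of $H_*(-;\Bbbk)$. For the only-if direction, suppose $K$ is Golod over $\Bbbk$. Since the products and Massey products of $K$ restrict to those of its full subcomplexes, every full subcomplex is Golod over $\Bbbk$; in particular no induced cycle of length $\ge 4$ occurs as a full subcomplex of $K$ (such a cycle carries a nonzero product in $\Tor_+$ by Theorem \ref{Hochster}), so the $1$-skeleton is chordal exactly as in Proposition \ref{1->2}(1). For condition (2), suppose $K_I$ has vertex-breakable second homology over $\Bbbk$ but is not $1$-neighborly; pick a non-edge $\{v,w\}\subset I$ and set $J=I-\{v,w\}$. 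Over $\Bbbk$ the K\"unneth formula gives $\widetilde H^2(K_{\{v,w\}}*K_J;\Bbbk)\cong\widetilde H^0(K_{\{v,w\}};\Bbbk)\otimes\widetilde H^1(K_J;\Bbbk)$, under which $m_{\{v,w\},J}^*$ is a product in $\Tor_+$ and hence zero, so its $\Bbbk$-dual $(m_{\{v,w\},J})_*\colon H_2(K_I;\Bbbk)\to H_2(K_{\{v,w\}}*K_J;\Bbbk)$ is zero. On the other hand vertex-breakability over $\Bbbk$ makes $(i_v)_*\oplus(i_w)_*\colon H_2(\dl_{K_I}(v);\Bbbk)\oplus H_2(\dl_{K_I}(w);\Bbbk)\to H_2(K_I;\Bbbk)$ non-surjective, so Lemma \ref{vertex-breakable}, whose proof is purely on the chain level and is therefore valid with coefficients in any field, forces $\{v,w\}$ to be an edge — a contradiction.

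For the if direction, assume (1) and (2). One first records the $\Bbbk$-coefficient form of Proposition \ref{dual special}: if the $1$-skeleton of $K$ is chordal and $\bigoplus_v(i_v)_*\colon\bigoplus_v H_2(\dl_K(v);\Bbbk)\to H_2(K;\Bbbk)$ is surjective, then $\bigoplus_v i_v^*\colon H^2(K;\Bbbk)\to\bigoplus_v H^2(\dl_K(v);\Bbbk)$ is injective — the proof is unchanged, chordality making the cycle space of the $1$-skeleton spanned by triangles, each lying in some $\dl_K(v)$ once $m\ge 4$, with $m\le 3$ and $K=\partial\Delta^2$ trivial — and likewise with coefficients in $A\otimes\Bbbk$ for any abelian group $A$. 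Using this I would prove, by induction on $|I|$, that $\varphi_{K_I}\colon|K_I|\to\R Z_{K_I}^{|I|-1}$ induces the zero map on reduced homology with $\Bbbk$-coefficients for every $\emptyset\ne I\subset[m]$. If $K_I$ is $1$-neighborly this is immediate from Theorem \ref{neighborly}. If not, condition (2) forces the vertex-sum $\bigoplus_v(i_v)_*$ for $K_I$ to be surjective over $\Bbbk$, and one runs the argument of Proposition \ref{2->3} with $\Bbbk$-homological triviality in place of null-homotopy: $\varphi_{K_I}$ factors through the simply-connected $|\widehat{K_I}|$ by Lemma \ref{factorization}, hence lifts to the universal cover of $\R Z_{K_I}^{|I|-1}$, and composing with the map $U\to K(A,2)$, $A=\pi_2(\R Z_{K_I}^{|I|-1})$, of the proof of Proposition \ref{2->3} produces a class in $H^2(|K_I|;A)$ whose mod-$\Bbbk$ reduction dies because its restrictions to the $\dl_{K_I}(v)$ are killed by the inductive hypothesis while $\bigoplus_v i_v^*$ is injective over $A\otimes\Bbbk$. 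With every $\varphi_{K_I}$ trivial on $\widetilde H_*(-;\Bbbk)$, the $\Bbbk$-coefficient refinement of Theorem \ref{decomposition} furnishes a $\Bbbk$-homology equivalence $Z_K\simeq\bigvee_{\emptyset\ne I\subset[m]}|\Sigma K_I|\wedge S^{|I|}$; the target is a suspension, so all products and higher Massey products in its cohomology, and hence in $H^*(Z_K;\Bbbk)$, vanish, which is Golodness over $\Bbbk$. (For the cup products alone a cruder induction on the number of vertices also works: a product into $\widetilde H^2(K_{[m]};\Bbbk)$ is nonzero only if some $K_I$ is disconnected, whence $K$ is not $1$-neighborly, $H_2(K_{[m]};\Bbbk)$ is spanned by the images of the $H_2(\dl_K(v);\Bbbk)$ by (2), and the product pushes forward through a product of the already-Golod subcomplex $K_{[m]-v}$; the Massey products, however, genuinely need the homotopy-theoretic statement.)

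The real obstacle is the if direction, and within it the two places where null-homotopy is relaxed to $\Bbbk$-homological triviality. In the $\Bbbk$-analogue of Proposition \ref{2->3} one must pin down precisely which reduction of the secondary class in $H^2(|K_I|;\pi_2)$ records triviality of $\varphi_{K_I}$ on $\widetilde H_*(-;\Bbbk)$: since $H_1(|K_I|)$ can carry torsion, the mod-$\Bbbk$ reduction couples the $\mathrm{Hom}$- and $\mathrm{Ext}$-summands of the universal-coefficient sequence through a Bockstein, and the bookkeeping has to be arranged so that the inductive hypothesis supplies exactly what is needed. The second delicate point is the $\Bbbk$-coefficient version of Theorem \ref{decomposition}: one must check that $\Bbbk$-homological triviality of all the $\varphi_{K_I}$ is enough to assemble the canonical maps into a genuine $\Bbbk$-homology equivalence onto the wedge — equivalently, to make the $\Bbbk$-cochain algebra of $Z_K$ quasi-isomorphic to that of a suspension — so that the Massey products, and not merely the cup products, are forced to vanish; $\dim K\le 2$ is what keeps the obstructions to such a splitting out of range. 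Everything else is a routine transcription of the arguments of Sections 3 and 4.
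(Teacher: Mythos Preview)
Your only-if direction matches the paper's. For the if direction, however, you take a much harder route than necessary, and the two obstacles you flag are real: neither a $\Bbbk$-homological analogue of Proposition~\ref{2->3} nor a $\Bbbk$-coefficient refinement of Theorem~\ref{decomposition} is established anywhere in the paper, and both would require genuinely new arguments that you have not supplied.

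The key ingredient you are missing is K\"atth\"an's theorem \cite[Theorem~6.3]{K}: for a simplicial complex of dimension at most three, vanishing of the ordinary products in $\Tor_+$ already forces vanishing of all higher Massey products. So your assertion that ``the Massey products genuinely need the homotopy-theoretic statement'' is simply false in this dimension range, and the if direction reduces to checking that $m_{I_1,I_2}^*$ vanishes on $\widetilde H^*(-;\Bbbk)$ for $*=1,2$. For $*=1$ the paper compares with the $1$-skeleton, which is a chordal graph and hence Golod. For $*=2$ the paper runs exactly the ``cruder induction'' you relegate to a parenthesis: inducting on $|I|$, either $K_I$ has vertex-breakable second homology over $\Bbbk$, is therefore $1$-neighborly by hypothesis, and hence Golod by Theorem~\ref{neighborly}; or not, in which case over the field $\Bbbk$ surjectivity of $\bigoplus_v(i_v)_*$ on $H_2$ dualizes immediately to injectivity of $\bigoplus_v i_v^*$ on $H^2$, and any product into $\widetilde H^2(K_I;\Bbbk)$, restricted to each $\dl_{K_I}(v)$, factors through a product in an already-Golod proper full subcomplex and hence vanishes. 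The fat-wedge filtration and the maps $\varphi_{K_I}$ never enter the argument.
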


  \begin{proof}
    The proof of Proposition \ref{1->2} implies that if $K$ is Golod over $\Bbbk$, then the two conditions hold.

    Suppose conversely that the two conditions hold. Since $\Bbbk$ is a field, the K\"unneth formula in the strong form holds as $H^*(X\times Y;\Bbbk)\cong H^*(X;\Bbbk)\otimes H^*(Y;\Bbbk)$. It is proved by K\"atthan \cite[Theorem 6.3]{K} that for a simplicial complex $L$ of dimension at most three, the condition for (higher) Massey products in Definition \ref{Golod} is redundant. Then to see that $K$ is Golod over $\Bbbk$, it is sufficient to show that the map $H^*(K_{I_1}*K_{I_2};\Bbbk)\to H^*(K_I;\Bbbk)$ is trivial for $*=1,2$, where $I_1,I_2$ are non-empty subsets of $[m]$ such that $I_1\cap I_2=\emptyset$ and $I_1\cup I_2=I$.

    Let $K^1$ denote the 1-skeleton of $K$. It is proved in \cite[Proposition 3.2]{IK3} that a graph is Golod if and only if it is chordal. Then, in particular, $K^1_I$ is Golod (over $\Bbbk$) for each $\emptyset\ne I\subset[m]$. Consider a commutative diagram
    \[
      \xymatrix{
        H^1(K_{I_1}*K_{I_2};\Bbbk)\ar[d]\ar[r]&H^1(K_I;\Bbbk)\ar[d]\\
        H^1(K^1_{I_1}*K^1_{I_2};\Bbbk)\ar[r]&H^1(K^1_I;\Bbbk)
      }
    \]
    where $I_1,I_2$ are non-empty subsets of $[m]$ such that $I_1\cap I_2=\emptyset$ and $I_1\cup I_2=I$. Since $K^1_I$ is Golod over $\Bbbk$, the lower horizontal arrow is trivial. Then since the vertical arrows are injective, the upper horizontal arrow is trivial too.

    We show that the map $H^2(K_{I_1}*K_{I_2};\Bbbk)\to H^2(K_I;\Bbbk)$ is trivial by induction on $|I|$. For $|I|=2$, the map is trivial because $H^2(K_I;\Bbbk)=0$. Suppose that $K_J$ is Golod for $|J|<|I|$. If $K_I$ has vertex-breakable second homology over $\Bbbk$, then it is 1-neighborly by assumption. Thus by Theorems \ref{Z_K Tor}, \ref{decomposition} and \ref{neighborly}, $K_I$ is Golod over $\Bbbk$. Consider a commutative diagram
    \[
      \xymatrix{
        H^2(K_{I_1}*K_{I_2};\Bbbk)\ar[r]\ar[d]&\bigoplus_{v\in I}H^2(L_1*L_2;\Bbbk)\ar[d]\\
        H^2(K_I;\Bbbk)\ar[r]&\bigoplus_{v\in I}H^2(\dl_{K_I}(v);\Bbbk)
      }
    \]
    where $L_i=K_{I_i}$ for $v\not\in I_i$ and $L_i=\dl_{K_{I_i}}(v)$ for $v\in I_i$. If $K_I$ does not have vertex-breakable second homology over $\Bbbk$, then the lower horizontal arrow is injective. By assumption, the right horizontal arrow is trivial, implying the left horizontal arrow is trivial too. Therefore the proof is complete.
  \end{proof}

  \begin{remark}
    The proof of Proposition \ref{Golod field} does not work over a ring in general because the K\"unneth formula in the strong form does not hold and $\mathrm{Hom}(-,R)$ is not right exact.
  \end{remark}

  We consider the following simplicial complexes $K_1$ and $K_2$, where vertices and edges having the same labels are identified.

  \begin{center}
    \begin{tikzpicture}[x=1cm, y=1cm, thick]
      \filldraw [fill=lightgray] (0,0) -- (0:2) -- ++(60:2) -- ++(120:2) -- ++(180:2) -- ++(240:2) -- cycle ;
      \draw (-1,1.73) -- (2,3.45);
      \draw (-1,1.73) -- (2,0);
      \draw (0,0) -- (0,3.45);
      \draw (2,0) -- (2,3.45);
      \draw (2,1.73) -- (3,1.73);
      \filldraw [fill=white] (2,1.73) -- (0,2.27) -- (0,1.17) -- cycle;
      \node [above] at (1,3.7) {$K_1$};
      \node [left=1mm] at (0,0) {A};
      \node [left=1mm] at (-1,1.73) {B};
      \node [left=1mm] at (0,3.45) {C};
      \node [right=1mm] at (2,3.45) {A};
      \node [right=1mm] at (3,1.73) {B};
      \node [right=1mm] at (2,0) {C};
      \node [above left] at (0,2.27) {D};
      \node [below left] at (0,1.17) {E};
      \node [above right] at (2,1.73) {F};
      \filldraw [fill=lightgray] (6,0) -- (0:8) -- ++(60:2) -- ++(120:2) -- ++(180:2) -- ++(240:2) -- cycle ;
      \draw (5,1.73) -- (8,3.45);
      \draw (5,1.73) -- (8,0);
      \draw (6,0) -- (6,3.45);
      \draw (8,0) -- (8,3.45);
      \draw (8,1.73) -- (9,1.73);
      \draw (8,1.73) -- (6,2.27) -- (6,1.17) -- cycle;
      \node [above] at (7,3.7) {$K_2$};
      \node [left=1mm] at (6,0) {P};
      \node [left=1mm] at (5,1.73) {Q};
      \node [left=1mm] at (6,3.45) {R};
      \node [right=1mm] at (8,3.45) {P};
      \node [right=1mm] at (9,1.73) {Q};
      \node [right=1mm] at (8,0) {R};
    \end{tikzpicture}
  \end{center}\vspace{2mm}

      %

  \noindent Note that
  \[
    |K_1|\simeq S^1\quad\text{and}\quad|K_2|\simeq\R P^2.
  \]
  We define a simplicial complex $M$ by gluing $K_1$ and $K_2$ along the triangles $DEF$ and $PQR$. Since the inclusion of the triangle $DEF$ into $K_1$ is equivalent to the degree two self-map of $S^1$, $M$ is a triangulation of the Moore space $S^1\cup_4e^2$.

  \begin{proof}[Proof of Theorem \ref{M Golod}]
    Let $\Bbbk$ be a field of characteristic 2. By definition, $H_2(M_I;\Bbbk)=0$ unless $M_I$ includes $K_2$, and $M_I$ does not have vertex-breakable second homology over $\Bbbk$ whenever $K_2$ is a proper subcomplex of $M_I$. Thus $M_I$ is Golod over $\Bbbk$ for $M_I\ne K_2$. If $M_I=K_2$, then $M_I$ is 1-neighborly, implying $M_I$ is Golod over $\Bbbk$ by Theorems \ref{Z_K Tor}, \ref{decomposition} and \ref{neighborly}.

    Let $\Bbbk$ be a field of characteristic $\ne 2$. Since $H_2(M_I;\Bbbk)=0$ for each $\emptyset\ne I\subset[m]$, it follows from Proposition \ref{Golod field} that $M$ is Golod over $\Bbbk$. Thus we obtain that $M$ is Golod over any field.

    Since $H_2(\dl_M(v);\Z/4)$ is either 0 or $\Z/2$ for each vertex $v$ and $H_2(M;\Z/4)\cong\Z/4$, $M$ has vertex-breakable second homology over $\Z/4$. Then since $M$ is not 1-neighborly, $M$ is not Golod over some ring by Theorem \ref{main}, where one actually sees from Lemma \ref{vertex-breakable} that $M$ is not Golod over $\Z/4$. Thus the proof is complete
  \end{proof}


\end{document}